\documentclass[a4paper]{article}
\usepackage{amssymb}
\usepackage{graphicx}
\usepackage{amsmath}
\usepackage{amsfonts}
\usepackage{amsthm}
\usepackage{mathrsfs}
\usepackage{dsfont}
\usepackage{indentfirst}
\usepackage{esint}
\usepackage{hyperref}
\usepackage{yhmath}

\numberwithin{equation}{section}

\newcommand{\osc}{\mathrm{osc}}

\newcommand{\Span}{\mathrm{Span}}
\newcommand{\Z}{\mathds{Z}}

\title{\Large \bf \boldmath\ \\ Polynomial Growth Harmonic Functions on Finitely Generated Abelian Groups} 

\author{\large  Bobo Hua$^\ast$\ \ \ \  J\"{u}rgen Jost$^{\dag}$ \ \ \ \ Xianqing Li-Jost$^{\ddag}$} 

\date{}

\par
\begin{document}

\maketitle

\renewcommand{\thefootnote}{\fnsymbol{footnote}}

\footnotetext{\hspace*{-5mm} \begin{tabular}{@{}r@{}p{13.4cm}@{}}
$^\ast$ $^{\dag}$ $^{\ddag}$ & Max Planck Institute for Mathematics in the Sciences, Leipzig, 04103, Germany.\\
$^{\dag}$ & supported by ERC Advanced Grant FP7-267087.\\
&{Email: bobohua@mis.mpg.de, jost@mis.mpg.de, xli-jost@mis.mpg.de}\\
&The Mathematics Subject Classification 2010: 31C05, 05C63, 82B41.
\end{tabular}}

\renewcommand{\thefootnote}{\arabic{footnote}}

\newtheorem{theorem}{Theorem}[section]
\newtheorem{con}[theorem]{Conjecture}
\newtheorem{lemma}[theorem]{Lemma}
\newtheorem{corollary}[theorem]{Corollary}
\newtheorem{definition}[theorem]{Definition}
\newtheorem{conv}[theorem]{Assumption}
\newtheorem{remark}[theorem]{Remark}

\bigskip

\begin{abstract}  In the present paper, we develop geometric analysis techniques on Cayley graphs of finitely generated abelian groups
to study the polynomial growth harmonic functions. We provide a
geometric analysis proof of the classical Heilbronn theorem \cite{H}
and the recent Nayar theorem \cite{N} on polynomial growth harmonic
functions on lattices $\mathds{Z}^n$ that does not use a
representation formula for harmonic functions. In the abelian group
case, by Yau's gradient estimate we actually give a simplified proof
of a general polynomial growth harmonic function theorem of
Alexopoulos \cite{Al1}. We calculate the precise dimension of the
space of polynomial growth harmonic functions on finitely generated
abelian groups by linear algebra, rather than by Floquet theory
Kuchment-Pinchover \cite{KP}. While the Cayley graph not only
depends on the abelian group, but also on the choice of a generating
set, we find that this dimension depends only on the group itself.
Moreover, we also calculate the dimension of solutions to higher
order Laplace operators.
\medskip
\end{abstract}
\section{Introduction}
Classically, in 1948 Heilbronn \cite{H} proved the polynomial growth
harmonic function theorem on the lattice $\mathds{Z}^n$ that
polynomial growth discrete harmonic functions are polynomials, and
calculated the dimension of the space of polynomial growth discrete
harmonic functions. Recently, Nayar \cite{N} gave another proof of
this theorem by the probabilistic method. Their proofs all depend on
the representation formula for discrete harmonic functions. In this
paper, we give a geometric analysis proof that can yield more
general results.

The study of harmonic polynomials on $\mathds{R}^n$ is classical,
and the precise dimension calculation of harmonic polynomials can be
found in \cite{L2}. In 1975, Yau \cite{Y1} proved the Liouville
theorem for harmonic functions on Riemannian manifolds with
nonnegative Ricci curvature. Then Cheng-Yau \cite{CheY} used
Bochner's technique to derive the gradient estimate for positive
harmonic functions, called Yau's gradient estimate, which implies
that sublinear growth harmonic functions on these manifolds are
constant. Then Yau \cite{Y2,Y3} conjectured that the space of
polynomial growth harmonic functions of growth order less than or
equal to $d$ on Riemannian manifolds with nonnegative Ricci is of
finite dimension. Li-Tam \cite{LT} and Donnelly-Fefferman \cite{DF}
independently solved the conjecture for manifolds of dimension two.
Then Colding-Minicozzi \cite{CM1,CM2,CM3} gave the affirmative
answer by using the volume doubling property and the Poincar\'e
inequality for arbitrary dimension. The simplified argument by the
mean value inequality can be found in \cite{L1,CM4} where the
dimension estimate is asymptotically optimal. This inspired many
generalizations on manifolds \cite{W,T,STW,LW1,LW2,CW,KLe,Le} and on
singular spaces \cite{D1,Kl,Hu1,Hu2,HJL}. In this paper, we give the
precise dimension calculation of polynomial growth harmonic
functions on finitely generated abelian groups.

In another direction, Avellaneda-Lin \cite{AvL} first proved the
polynomial growth harmonic function theorem for elliptic
differential operators with periodic coefficients in $\mathds{R}^n$
(see \cite{MS,Al3,Al2,ALO,Lin,LW3} for more generalizations).
Alexopoulos \cite{Al1} proved a more general polynomial growth
harmonic function theorem for groups of polynomial volume growth. By
a celebrated theorem of Gromov \cite{G}, every finitely generated
group $G$ of polynomial growth is virtually nilpotent (i.e. it has a
nilpotent subgroup $H$ of finite index). For some torsion-free
finite-index subgroup $H'$ of $H,$ it can be embedded as a lattice
in a simply connected nilpotent Lie group $N.$ By considering the
exponential coordinate of $N,$ $N$ is identified with $\mathds{R}^n$
(see \cite{Ra}). Alexopoulos proved that every polynomial growth
harmonic function on $G,$ when restricted to $H',$ can be extended
to a polynomial on $\mathds{R}^n.$ He used the homogenization
theory, Krylov-Safonov's argument and some ideas of Avellaneda-Lin
\cite{AvL}. Instead of doing that, we shall prove Yau's gradient
estimate on abelian groups to simplify the argument in this special
case.

From another point of view, Kuchment-Pinchover \cite{KP} introduced
a method in Floquet theory to study the space of solutions to
periodic elliptic equations on the abelian cover of a compact
Riemannian manifold or a finite graph. They calculated the dimension
of polynomial growth solutions to the standard operators, in
particular, the Laplace operators, which covers our results. The aim
of this paper is to provide a geometric point of view of this
problem. We will use some geometric analysis favored methods and
basic linear algebra to investigate this problem which may shed some
light on how to solve these problems on general graphs.

We develop some geometric analysis techniques on Cayley graphs of
finitely generated abelian groups. Firstly, we prove Yau's gradient
estimate (see Theorem \ref{YGE}) for positive discrete harmonic
functions. Note that Kleiner \cite{Kl} obtained the Poincar\'e
inequality on Cayley graphs of finitely generated (not necessarily
abelian) groups (see also \cite{ST}). For the abelian case,
combining it with the natural volume doubling property, we obtain
the uniform Poincar\'e inequality (see Lemma \ref{PI}) from which
the mean value inequality follows by the Moser iteration. In
addition, for an abelian group, Bochner's formula can be easily
verified, i.e. $|\nabla u|^2$ is subharmonic for a discrete harmonic
function $u$ which is essentially due to Chung-Yau \cite{CY} and
Lin-Yau \cite{LY} on Ricci flat graphs. Then these results together
imply Yau's gradient estimate on Cayley graphs of finitely generated
abelian groups as in the case of Riemannian manifolds with
nonnegative Ricci curvature. Besides the geometric method, this can
also be obtained by the local central limit theorem, a probabilistic
method (see Theorem 1.7.1 in Lawler \cite{La} for the standard
lattice case). By the fundamental theorem of finitely generated
abelian group (see \cite{Su,R}), any finitely generated abelian
group $G$ is isomorphic to the direct sum
$\mathds{Z}^m\bigoplus\oplus_{i=1}^l\mathds{Z}_{q_i},$ where $m,l\in
\mathds{N}$, $q_i=p_i^{a_i}$ for some prime number $p_i$ and
$a_i\in\mathds{N}.$

\begin{theorem}[Yau's gradient estimate]\label{YGE}Let $(G,S)$ be the Cayley graph of a finitely generated abelian group with symmetric generating set $S$ $(i.e. \ S=-S),$ and $G\cong\mathds{Z}^m\bigoplus\oplus_{i=1}^l\mathds{Z}_{q_i}.$ Then there exist constants $C_1$ and $C_2$ depending only on $m$ and $S,$ such that for any $x\in G,$ $R\geq 1$ and any positive discrete harmonic function $f$ on $B_{C_1R}(x),$ we have
\begin{equation}\label{YGE1}|\nabla f|(x)\leq \frac{C_2}{R}f(x).\end{equation}
\end{theorem}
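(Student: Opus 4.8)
The plan is to transplant the Cheng--Yau argument to the discrete setting, using exactly the three ingredients highlighted in the introduction: the Bochner-type subharmonicity of $|\nabla f|^2$, the volume doubling property of $(G,S)$ (automatic since $G$ has polynomial growth of degree $m$), and the uniform Poincar\'e inequality of Lemma \ref{PI}. Throughout, ``harmonic'' means annihilated by the combinatorial Laplacian of the $|S|$-regular graph $(G,S)$, and for $s\in S$ I write $\partial_s f(x)=f(x+s)-f(x)$, so that $|\nabla f|^2$ is, up to the fixed normalizing factor, $\sum_{s\in S}(\partial_s f)^2$. I will prove the estimate for $R\ge 1$ with $r:=R$ and all auxiliary balls contained in $B_{C_1R}(x)$ for a suitable $C_1$ depending only on $m$ and $S$ (indeed $C_1$ can be taken absolute).

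\textbf{Step 1 (Bochner / subharmonicity).} Since the Laplacian of a Cayley graph commutes with translation by group elements, if $f$ is harmonic on a ball then each difference $\partial_s f$ is harmonic on a slightly smaller ball. An elementary computation shows that the square of a harmonic function $h$ is subharmonic --- the error term in $\Delta(h^2)(x)$ is precisely $\sum_{y}w_{xy}(h(y)-h(x))^2\ge 0$ --- so $|\nabla f|^2$, being a sum of squares of harmonic functions, is nonnegative and subharmonic on $B_{C_1R-O(1)}(x)$. This is the step the introduction calls ``easily verified'', and it is the only place where commutativity of $G$ enters essentially.

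\textbf{Steps 2--3 (mean value inequality, Harnack, Caccioppoli, conclusion).} Feeding the volume doubling property together with Lemma \ref{PI} into the Moser iteration yields, with constants depending only on $m$ and $S$: (i) a mean value inequality $\sup_{B_r(x)}v\le \frac{C}{\vol(B_{2r}(x))}\sum_{y\in B_{2r}(x)}v(y)$ for every nonnegative subharmonic $v$ on $B_{2r}(x)$, and (ii) the elliptic Harnack inequality $\sup_{B_{4r}(x)}f\le C f(x)$ for positive harmonic $f$ on $B_{8r}(x)$; both are the standard consequences of the ``volume doubling $+$ Poincar\'e'' framework on graphs. Next, testing the harmonic equation against $\eta^2(f-f(x))$ with a cutoff $\eta$ equal to $1$ on $B_{2r}(x)$, supported in $B_{4r}(x)$, and with discrete gradient $O(1/r)$, and summing by parts, gives a Caccioppoli estimate $\sum_{y\in B_{2r}(x)}|\nabla f|^2(y)\le \frac{C}{r^2}\sum_{y\in B_{4r}(x)}|f(y)-f(x)|^2$. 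Since $f>0$, Harnack gives $|f(y)-f(x)|\le \sup_{B_{4r}(x)}f\le C f(x)$ on $B_{4r}(x)$, so the right-hand side is $\le \frac{C}{r^2}\vol(B_{4r}(x))f(x)^2$. Combining this with the mean value inequality applied to $v=|\nabla f|^2$ and using volume doubling once more to cancel $\vol(B_{4r}(x))/\vol(B_{2r}(x))$,
\[
|\nabla f|^2(x)\ \le\ \sup_{B_r(x)}|\nabla f|^2\ \le\ \frac{C}{\vol(B_{2r}(x))}\sum_{y\in B_{2r}(x)}|\nabla f|^2(y)\ \le\ \frac{C}{r^2}\,f(x)^2 ,
\]
and taking square roots with $r=R$ gives \eqref{YGE1}. (The remaining bounded range $1\le R\le C_1$ is trivial: there $|\nabla f|^2(x)\le C(\sup_{B_1(x)}f)^2\le C f(x)^2$ directly by Harnack on $B_{C_1R}(x)$.)

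\textbf{Main obstacle.} The difficulty is not the strategy but the discreteness book-keeping. First, the discrete Leibniz rule holds only up to error terms, so summation by parts in the Caccioppoli step produces discrete commutators that must be absorbed using the bounded vertex degree $|S|$ and volume doubling. Second, and more importantly, one must check that \emph{every} constant entering the Moser iteration, the mean value inequality and the Harnack inequality depends only on the polynomial growth degree $m$ and the local structure $S$, and in particular is uniform in $R$ and in the torsion part $\oplus_{i=1}^l\Z_{q_i}$; this uniformity is what makes the conclusion a faithful discrete analogue of Yau's gradient estimate on manifolds of nonnegative Ricci curvature.
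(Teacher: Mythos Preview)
Your proof is correct and follows essentially the same route as the paper: Bochner-type subharmonicity of $|\nabla f|^2$, then the mean value inequality, Caccioppoli, volume doubling, and Harnack chained together exactly as in the paper's four-line display. The only cosmetic difference is in Step~1, where the paper proves $L^S|\partial_s f|^2\ge 0$ by a direct Cauchy--Schwarz computation (Lemma~\ref{BCHNL}) rather than your (equivalent, arguably cleaner) observation that each $\partial_s f$ is harmonic and squares of harmonic functions are subharmonic.
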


Secondly, from Yau's gradient estimate, we know that $|\nabla f|\leq
CR^{d-1},$ on $B_R$ for $R\geq R_0$ if the discrete harmonic function
$f$ satisfies $|f|\leq CR^d$ on $B_R$ for $R\geq R_1.$ That is, the
growth order decreases when we take derivatives.

This is the key to  an induction argument to give a geometric
analysis proof of Heilbronn's theorem. This scheme will, in fact,
work  for all abelian groups. Let us denote the set of polynomial
growth harmonic functions of growth order less than or equal to $d$
on the Cayley graph $(G,S)$ by $H^d(G,S):=\{u:G\rightarrow
\mathds{R}\mid\ L^Su=0, |u|(x)\leq C(d^S(p,x)+1)^d\},$ where $L^S$
is the Laplacian operator on $(G,S),$ $d^S$ is the distance function
to some fixed $p\in G.$ For the finitely generated abelian group
$G=G_1\oplus
G_2\cong\mathds{Z}^m\bigoplus\oplus_{i=1}^l\mathds{Z}_{q_i},$ we
define the natural projection $$\pi_{G_1}:\ G\rightarrow G_1,$$
$$x\mapsto \pi_{G_1}(x)=x_1,$$ for $x=x_1+x_2,$ $x_1\in G_1$ and
$x_2\in G_2.$ It is easy to see that $\pi_{G_1}S$ is a generating
set of $G_1$ if $S$ is a generating set of $G$.

\begin{theorem}[Generalized Heilbronn's theorem]\label{GHT} Let $(G,S)$ be the Cayley graph of a finitely generated abelian group, $G=G_1\oplus G_2\cong\mathds{Z}^m\bigoplus\oplus_{i=1}^l\mathds{Z}_{q_i}.$  Then $$H^d(G,S)=H^d(G_1,\pi_{G_1}S),$$ moreover any $f\in H^d(G,S)$ is a polynomial when it is restricted to $G_1\cong \mathds{Z}^m,$ and it is constant on $G_2$ i.e. $f(x+w)=f(x),$ for $\forall x\in G,$ $\forall w\in G_2.$
\end{theorem}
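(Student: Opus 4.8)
The plan is to argue by induction on $\lfloor d\rfloor$, with Theorem~\ref{YGE} as the only analytic input and the product structure $G=G_1\oplus G_2$ doing the rest. For $s\in S$ write $\partial_s f:=f(\cdot+s)-f(\cdot)$; since the Laplacian $L^S$ commutes with the translations of the Cayley graph, $\partial_s f$ is harmonic on all of $G$ whenever $f$ is, and $|\partial_s f|\le|\nabla f|$. The first step is to pass from Theorem~\ref{YGE} to a gradient bound for signed functions: if $f\in H^d(G,S)$ with $|f|(x)\le C(d^S(p,x)+1)^d$, then for $x$ with $d^S(p,x)\ge1$ one applies \eqref{YGE1} on $B_{C_1R}(x)$, with $R$ comparable to $d^S(p,x)$, to the positive harmonic function $f+M_R$ where $M_R:=\sup_{B_{C_1R}(x)}|f|+1$; since constants have zero gradient this yields $|\nabla f|(x)\le C'(d^S(p,x)+1)^{d-1}$, and the same bound holds trivially on the finite set $B_1(p)$. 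Hence each $\partial_s f$ lies in $H^{d-1}(G,S)$: differencing strictly lowers the growth order, which is the engine of the induction.

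For the base case $\lfloor d\rfloor=0$, i.e.\ $d<1$, one has $|\nabla f|(x)\to0$ as $d^S(p,x)\to\infty$, so each $\partial_s f$ is harmonic on $G$ and vanishes at infinity; the maximum principle on an exhaustion of $G$ by balls forces $\partial_s f\equiv0$, so $f$ is constant, which is $G_2$-invariant and a degree-$0$ polynomial on $G_1$. For the inductive step, assume $\lfloor d\rfloor\ge1$ and the theorem for all smaller growth orders. By the first step $\partial_s f\in H^{d-1}(G,S)$ for every $s\in S$, and $\lfloor d-1\rfloor<\lfloor d\rfloor$, so by the induction hypothesis each $\partial_s f$ is $G_2$-invariant and restricts to a polynomial on $G_1$.

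Next I would deduce that $f$ itself is $G_2$-invariant. For $w\in G_2$ the operators $\partial_s$ and $\partial_w$ commute, so $\partial_s(\partial_w f)=\partial_w(\partial_s f)=0$ for every $s\in S$; since $S$ generates $G$, the function $\partial_w f$ is constant, say $\partial_w f\equiv c_w$. The identity $\partial_{w+w'}=\partial_w+\partial_{w'}+\partial_w\partial_{w'}$ together with $\partial_w\partial_{w'}f=\partial_{w'}(c_w)=0$ gives $c_{w+w'}=c_w+c_{w'}$, so $w\mapsto c_w$ is a group homomorphism $G_2\to\mathds{R}$; as $G_2$ is finite it is trivial, hence $\partial_w f\equiv0$ for all $w\in G_2$ and $f$ is $G_2$-invariant. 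Thus $f$ descends to $\tilde f\colon G_1\cong\Z^m\to\mathds{R}$; since $L^S$ pushes forward to the Laplacian of $(G_1,\pi_{G_1}S)$ on $G_2$-invariant functions and $\pi_{G_1}$ does not increase distances, $\tilde f\in H^d(G_1,\pi_{G_1}S)$, while pullback along $\pi_{G_1}$ sends $H^d(G_1,\pi_{G_1}S)$ back into $H^d(G,S)$, which gives the equality $H^d(G,S)=H^d(G_1,\pi_{G_1}S)$.

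It remains to show $\tilde f$ is a polynomial. Each $\partial_s f$ descends to $\partial_{\pi_{G_1}s}\tilde f$, which is a polynomial because $\partial_s f$ is (by the induction hypothesis); using $\partial_{a+b}\tilde f=\partial_a\tilde f+\partial_b\tilde f+\partial_a(\partial_b\tilde f)$ and that $\partial_a$ preserves polynomials, an induction on word length in the generating set $\pi_{G_1}S$ shows $\partial_g\tilde f$ is a polynomial for every $g\in\Z^m$; in particular $Q_j:=\partial_{e_j}\tilde f$ are polynomials with $\partial_{e_i}Q_j=\partial_{e_j}Q_i$. By the discrete Poincar\'e lemma on $\Z^m$ there is a polynomial $P$ with $\partial_{e_j}P=Q_j$ (build $P$ by telescoping the $Q_j$; it is a polynomial by Faulhaber's formula), so $\tilde f-P$ has vanishing first differences and is constant, whence $\tilde f$ is a polynomial on $G_1$, closing the induction. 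The genuinely new difficulty is the torsion factor $G_2$: the $\Z^m$-part is standard once Theorem~\ref{YGE} is available, but ruling out nonconstant behavior along $G_2$ rests on the observation that a harmonic function all of whose first differences are $G_2$-invariant has constant $G_2$-differences, which then vanish because a finite group has no nonzero homomorphism to $\mathds{R}$; the growth-order bookkeeping under $\pi_{G_1}$ and the discrete antidifferentiation are routine.
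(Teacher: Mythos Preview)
Your proof is correct and rests on the same engine as the paper's: Yau's gradient estimate (Theorem~\ref{YGE}, or rather its oscillation form Corollary~\ref{OSCC}) shows that differencing drops the growth order by one, and iterating this kills any $f\in H^d(G,S)$ after $[d]+1$ differences. The paper organizes this slightly differently: rather than inducting on $\lfloor d\rfloor$, it first proves the torsion-free case (Theorem~\ref{TFL}) by taking $[d]+1$ partial differences along the standard basis $e_1,\dots,e_m$ of $\mathds{Z}^m$ and citing a lemma of Nayar for the polynomial conclusion; then for the torsion part it takes $[d]+1$ differences along each $w_j$, lifts $\mathds{Z}_{q_j}$ to $\mathds{Z}$, and observes that a periodic polynomial in one variable is constant.

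Where you genuinely diverge is in these two finishing steps. For the torsion factor you use that each $\partial_w f$ is constant and that $w\mapsto c_w$ is a homomorphism $G_2\to\mathds{R}$ from a finite group, hence zero; this is cleaner and avoids any lifting. For the polynomial conclusion on $\mathds{Z}^m$ you invoke a discrete Poincar\'e lemma (antidifferentiating compatible polynomial first differences via telescoping and Faulhaber) instead of an external reference. Both choices make your argument self-contained at the cost of a little extra work; the paper's route is shorter but leans on a quoted lemma. Either way the analytic content is identical.
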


Nayar \cite{N} proved a strong version of Heilbronn's theorem. We
denote by $HM^d(G,S):=\Span\{u:G\rightarrow \mathds{R}\mid\ L^Su=0,
u(x)\geq -C(d^S(p,x)+1)^d \}$ the linear span of one-sided bounded
polynomial growth harmonic functions. It is trivial that
$H^d(G,S)\subset HM^d(G,S).$ By the Harnack inequality
(\ref{HNKI1}), we give a geometric analysis proof of Nayar's
theorem.
\begin{theorem}[Generalized Nayar's theorem]\label{GNT} Let $(G,S)$ be the Cayley graph of a finitely generated abelian
group, $G=G_1\oplus
G_2\cong\mathds{Z}^m\bigoplus\oplus_{i=1}^l\mathds{Z}_{q_i}$. Then
$$HM^d(G,S)=H^d(G,S)=H^d(G_1,\pi_{G_1}S).$$

\end{theorem}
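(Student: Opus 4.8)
The plan is to show the only nontrivial inclusion $HM^d(G,S)\subset H^d(G,S)$, since $H^d(G,S)\subset HM^d(G,S)$ is trivial and $H^d(G,S)=H^d(G_1,\pi_{G_1}S)$ is exactly Theorem \ref{GHT}. By definition of the span, it suffices to take a single discrete harmonic function $u$ on $(G,S)$ satisfying a one-sided bound $u(x)\geq -C(d^S(p,x)+1)^d$ and to prove that in fact $|u|(x)\leq C'(d^S(p,x)+1)^d$, i.e.\ $u\in H^d(G,S)$; the linear span then collapses to $H^d(G,S)$. So the whole theorem reduces to: a one-sided polynomially bounded harmonic function on the Cayley graph of a finitely generated abelian group is (two-sided) polynomially bounded, with the same growth order.

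First I would reduce to the case of a function bounded below by $0$: set $v_R(x):=u(x)+C(d^S(p,x)+1)^d$ on a large ball $B_R(p)$. This $v_R$ is nonnegative on $B_R(p)$ but is no longer harmonic — it is harmonic plus $L^S$ applied to the polynomial weight, which is a controlled error. The cleaner route, which I would actually pursue, is to invoke the Harnack inequality (\ref{HNKI1}) referred to in the text: for a \emph{nonnegative} discrete harmonic function $g$ on $B_{2r}(x)$ one has $\sup_{B_r(x)} g \leq A\,\inf_{B_r(x)} g$ with $A$ depending only on $m$ and $S$ (this is the standard consequence of the uniform Poincaré inequality of Lemma \ref{PI} and Moser iteration, exactly as quoted in the introduction). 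To apply it to $u$, which is only bounded below, the key step is the standard trick: fix $R$ and consider $w:=u - \inf_{B_{2R}(p)} u \geq 0$ on $B_{2R}(p)$; $w$ is harmonic and nonnegative there, so Harnack gives $\sup_{B_R(p)} w \leq A\,\inf_{B_R(p)} w \leq A\,(w(p))= A\,(u(p)-\inf_{B_{2R}(p)}u)$. Since $\inf_{B_{2R}(p)} u \geq -C(2R+1)^d$, we get $\sup_{B_R(p)} u \leq \inf_{B_{2R}(p)}u + A(u(p)-\inf_{B_{2R}(p)}u) \leq A\,u(p) + (A-1)C(2R+1)^d$, hence $\sup_{B_R(p)}|u| \leq C''(R+1)^d$ for all large $R$. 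Therefore $u\in H^d(G,S)$.

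Having shown every one-sided bounded polynomial growth harmonic function of order $\le d$ lies in $H^d(G,S)$, the linear span $HM^d(G,S)$ is contained in $H^d(G,S)$, and the reverse inclusion is trivial, so $HM^d(G,S)=H^d(G,S)$; combined with Theorem \ref{GHT} this yields $HM^d(G,S)=H^d(G,S)=H^d(G_1,\pi_{G_1}S)$, and in particular every such function is, after restriction, a polynomial on $G_1\cong\mathds{Z}^m$ and constant on $G_2$. I expect the main obstacle to be purely bookkeeping: making sure the Harnack constant $A$ depends only on $m$ and $S$ (uniformly in $x$ and $r$) and correctly tracking the ball radii through the chain $B_R\subset B_{2R}$, since the one-sided bound on $B_{2R}$ must be converted into a two-sided bound on the smaller ball $B_R$ while keeping the exponent $d$ unchanged — a matter of choosing the comparison radius proportional to $R$ and absorbing constants. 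No new analytic input beyond the Harnack inequality \eqref{HNKI1} and Theorem \ref{GHT} is needed.
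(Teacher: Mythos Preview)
Your proposal is correct and follows essentially the same route as the paper: reduce to showing a one-sided bounded harmonic function is two-sided bounded, shift by a constant to make it nonnegative on a large ball, apply the Harnack inequality (Lemma~\ref{HNKI}) comparing the value at an arbitrary point to the value at $p$, and use the one-sided lower bound $-C(R+1)^d$ to control the shift. The only cosmetic differences are that the paper shifts by the explicit constant $CR^d$ rather than by $-\inf_{B_{2R}(p)}u$, and centers the Harnack ball at the running point $x$ rather than at $p$; the radius bookkeeping you flag (replacing your ``$2R$'' by the Harnack constant $C_1R$) is indeed the only thing to tidy up.
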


Thirdly, for discrete harmonic polynomials we calculate the precise
dimension by linear algebra. In fact, instead of using the technical
lemma (see \cite{DS,Hob}) from difference equations as Heilbronn
\cite{H} did in the lattice case, we apply the dimension comparison
argument with harmonic polynomials in $\mathds{R}^n.$ Conversely,
our argument provides a proof of this difference equation lemma.
Moreover, we can calculate the dimension of the space of polynomial
growth harmonic functions on arbitrary Cayley graph of a finitely
generated abelian group. It is surprising that the dimension of
polynomial growth harmonic functions does not depend on the choice
of generating set $S$ for the abelian group $G.$ Actually, the graph
structures of two Cayley graphs of the abelian group $G$ with two
generating set $S_1$, $S_2$ can be quite different. While the
Laplacian operator depends on the generating set, the dimension of
polynomial growth harmonic functions does not. We denote by
$HP^k(\mathds{R}^m)$ the space of harmonic polynomials on
$\mathds{R}^m$ with degree less than or equal to $k,$
$k\in\mathds{N}\cup\{0\}$. It is well known (see \cite{L2}) that
$$\dim HP^k(\mathds{R}^m)=\binom{m+k-1}{k}+\binom{m+k-2}{k-1}.$$

\begin{theorem}[Dimension calculation]\label{DCT}Let $(G,S)$ be the Cayley graph of a finitely generated abelian group, $G=G_1\oplus G_2\cong\mathds{Z}^m\bigoplus\oplus_{i=1}^l\mathds{Z}_{q_i}.$ Then
$$\dim HM^d(G,S)=\dim H^d(G,S)=\dim HP^{[d]}(\mathds{R}^m).$$
\end{theorem}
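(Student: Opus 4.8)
The plan is to reduce to the lattice by Theorems~\ref{GHT} and~\ref{GNT} and then carry out a dimension comparison between discrete harmonic polynomials on $\mathds{Z}^m$ and harmonic polynomials on $\mathds{R}^m$. Write $S':=\pi_{G_1}S$, a generating set of $G_1\cong\mathds{Z}^m$ which is symmetric since $S=-S$, and let $L^{S'}$ be the associated Laplacian. By Theorems~\ref{GHT} and~\ref{GNT} we already have $HM^d(G,S)=H^d(G,S)=H^d(G_1,S')$, every $f\in H^d(G_1,S')$ is a polynomial on $\mathds{Z}^m$ of degree at most $[d]$, and conversely a polynomial $p$ on $\mathds{R}^m$ of degree $\le[d]$ restricts to an element of $H^d(\mathds{Z}^m,S')$ precisely when $L^{S'}p\equiv0$ as a polynomial identity (indeed $L^{S'}p$ is then a polynomial vanishing on all of $\mathds{Z}^m$), while restriction $P^{[d]}(\mathds{R}^m)\to\{\text{functions on }\mathds{Z}^m\}$ is injective, where $P^{[d]}(\mathds{R}^m)$ denotes the polynomials of degree $\le[d]$. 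Thus $H^d(\mathds{Z}^m,S')$ is identified with $\ker\bigl(L^{S'}\colon P^{[d]}(\mathds{R}^m)\to P^{[d]-2}(\mathds{R}^m)\bigr)$, and it remains to compute the dimension of this kernel.

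The structural input is a Taylor expansion: for a polynomial $p$, up to normalization $L^{S'}p(x)=\sum_{s\in S'}\bigl(p(x+s)-p(x)\bigr)=\sum_{|\alpha|\ge1}\tfrac{1}{\alpha!}\bigl(\sum_{s\in S'}s^{\alpha}\bigr)\partial^{\alpha}p(x)$; since $S'=-S'$ the odd-order terms cancel, so $L^{S'}$ maps $P^{k}(\mathds{R}^m)$ into $P^{k-2}(\mathds{R}^m)$, and if $p$ has degree $k$ with top homogeneous part $p_{k}$ then the degree-$(k-2)$ homogeneous part of $L^{S'}p$ equals $\Delta_{A}p_{k}$, where $\Delta_{A}=\sum_{i,j}a_{ij}\partial_{i}\partial_{j}$ with $a_{ij}=\tfrac12\sum_{s\in S'}s_{i}s_{j}$. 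Because $S'$ generates $\mathds{Z}^m$ as a group it spans $\mathds{R}^m$ as a vector space, so $A=(a_{ij})$ is positive definite and $\Delta_{A}$ is a nondegenerate constant-coefficient elliptic operator; after a linear change of variables $x\mapsto Tx$ with $A=TT^{\top}$ it becomes the standard Laplacian, so $\dim\ker\bigl(\Delta_{A}|_{\mathcal{H}_{k}}\bigr)=\dim\mathcal{H}_{k}^{\mathrm{harm}}$, where $\mathcal{H}_{k}$ is the space of homogeneous polynomials of degree $k$ in $m$ variables and $\mathcal{H}_{k}^{\mathrm{harm}}$ its harmonic ones.

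Next I would prove the two inequalities. For the lower bound, rank-nullity gives
$$\dim H^d(\mathds{Z}^m,S')=\dim\ker\bigl(L^{S'}|_{P^{[d]}}\bigr)\ \ge\ \dim P^{[d]}(\mathds{R}^m)-\dim P^{[d]-2}(\mathds{R}^m)=\dim\mathcal{H}_{[d]}+\dim\mathcal{H}_{[d]-1},$$
and the right-hand side equals $\binom{m+[d]-1}{[d]}+\binom{m+[d]-2}{[d]-1}=\dim HP^{[d]}(\mathds{R}^m)$ by the formula quoted above. For the upper bound I would filter by degree: set $V_{k}=H^d(\mathds{Z}^m,S')\cap P^{k}(\mathds{R}^m)$, so $V_{-1}=0$ and $V_{[d]}=H^d(\mathds{Z}^m,S')$. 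The ``top homogeneous part'' map $\phi_{k}\colon V_{k}\to\mathcal{H}_{k}$ is linear with kernel $V_{k-1}$, and since the degree-$(k-2)$ part of the identity $L^{S'}p=0$ reads $\Delta_{A}p_{k}=0$, the image of $\phi_{k}$ lies in $\ker(\Delta_{A}|_{\mathcal{H}_{k}})$. Summing $\dim V_{k}-\dim V_{k-1}=\mathrm{rank}\,\phi_{k}\le\dim\mathcal{H}_{k}^{\mathrm{harm}}$ over $k=0,\dots,[d]$ yields $\dim H^d(\mathds{Z}^m,S')\le\sum_{k=0}^{[d]}\dim\mathcal{H}_{k}^{\mathrm{harm}}=\dim HP^{[d]}(\mathds{R}^m)$. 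The two bounds together give $\dim H^d(\mathds{Z}^m,S')=\dim HP^{[d]}(\mathds{R}^m)$, and combining with the first paragraph, $\dim HM^d(G,S)=\dim H^d(G,S)=\dim HP^{[d]}(\mathds{R}^m)$.

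The only genuinely substantive step is the upper bound: it rests on the ellipticity of the leading part $\Delta_{A}$ of $L^{S'}$ — this is exactly where the hypothesis that $S$, hence $\pi_{G_1}S$, generates the group is used — and on the classical fact, via the reduction to the standard Laplacian, that a nondegenerate constant-coefficient Laplacian annihilates precisely a $\dim\mathcal{H}_{k}^{\mathrm{harm}}$-dimensional subspace of $\mathcal{H}_{k}$. Equivalently, the two inequalities combined assert that $L^{S'}\colon P^{[d]}(\mathds{R}^m)\to P^{[d]-2}(\mathds{R}^m)$ is onto, which is precisely the content of the difference-equation lemma used classically by Heilbronn, now recovered rather than assumed. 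The Taylor bookkeeping and the binomial identities are routine.
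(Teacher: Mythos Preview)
Your proof is correct and shares the paper's overall architecture: reduce to $\mathds{Z}^m$ via Theorems~\ref{GHT} and~\ref{GNT}, identify $H^d(\mathds{Z}^m,S')$ with $\ker\bigl(L^{S'}\colon P^{[d]}_m\to P^{[d]-2}_m\bigr)$, and sandwich its dimension between two copies of $\dim HP^{[d]}(\mathds{R}^m)$. The lower bound via rank--nullity is the same in both. The upper bound, however, is obtained by a genuinely different argument. The paper proves $\dim D^k_{S,m}\le\dim R^k_m$ by induction on $k$: it uses the difference operator $\delta_1 f(x)=f(x+e_1)-f(x)$, identifies $\ker\delta_1$ with $D^k_{S',m-1}$ on the projected lattice $\mathds{Z}^{m-1}$, and iterates the resulting recursion in both $k$ and $m$ down to the one-dimensional base case of Lemma~\ref{1dim}. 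You instead Taylor-expand $L^{S'}$ to extract its quadratic leading symbol $\Delta_A$, observe that $A=\frac12\sum_{s\in S'}ss^{\top}$ is positive definite because $S'$ spans $\mathds{R}^m$, and filter by degree so that each graded piece embeds into $\ker(\Delta_A|_{\mathcal H_k})$, which after a linear change of variables has the dimension of the degree-$k$ homogeneous harmonic polynomials. Your route is more analytic and makes the link to continuous harmonic polynomials transparent (the principal part of $L^{S'}$ is literally a constant-coefficient elliptic Laplacian); the paper's route is purely combinatorial and self-contained, avoiding any appeal to the classical homogeneous-harmonic decomposition, and as a structural by-product it also yields the surjectivity of $\delta_1\colon D^k_{S,m}\to D^{k-1}_{S,m}$ in addition to that of $L^{S'}$ (Corollary~\ref{coro1}).
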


In the last section, we study some higher order operators. The $n$th
order Laplace operator is defined as
$$L^{n,S}=L^S\circ
L^S\circ\cdots\circ L^S,$$ i.e. $n$-times composition of Laplace
operators. These operators correspond to higher order elliptic
operators in the continuous setting. For instance, when $n=2,$
$L^{2,S}$ is a discrete generalization of the bi-Laplace operator
$\Delta^2.$ A function $u$ on $G$ is called discrete $n$-harmonic if
$L^{n,S}u=0.$ Let us denote by $H^{n,d}(G,S):=\{u:G\rightarrow
\mathds{R}\mid\ L^{n,S}u=0, |u|(x)\leq C(d^S(p,x)+1)^d\}$ the space
of polynomial growth $n$-harmonic functions of growth order not
larger than $d$. By the techniques developed before for harmonic
functions, we also obtain the precise dimension of $H^{n,d}(G,S).$
\begin{theorem}\label{hoo} Let
$(G,S)$ be the Cayley graph of a finitely generated abelian group,
$G=G_1\oplus
G_2\cong\mathds{Z}^m\bigoplus\oplus_{i=1}^l\mathds{Z}_{q_i}.$  Then
$$H^{n,d}(G,S)=H^{n,d}(G_1,\pi_{G_1}S),$$ moreover any $f\in H^{n,d}(G,S)$ is a
polynomial on $G_1\cong \mathds{Z}^m,$ and constant on $G_2.$ In
addition,
\begin{equation*}
\dim
H^{n,d}(G,S)=\sum_{i=[d]-2n+1}^{[d]}\binom{m+i-1}{i}.\end{equation*}
\end{theorem}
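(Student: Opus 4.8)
The plan is to reduce Theorem \ref{hoo} to the harmonic case (Theorems \ref{GHT} and \ref{DCT}) together with a linear-algebra computation on the kernel of $L^{n,S}$. First I would establish the analogue of the gradient decay and polynomial-restriction statement for $n$-harmonic functions. The key observation is that if $L^{n,S}u = 0$, then $v := L^{n-1,S}u$ is discrete harmonic; if moreover $u$ has polynomial growth of order $\le d$, then by iterating the mean value inequality (which follows from the uniform Poincaré inequality, Lemma \ref{PI}, exactly as in the harmonic case) each application of $L^S$ produces a function of growth order $\le d$ as well, so $v \in H^{d}(G,S)$. By Theorem \ref{GHT}, $v$ is a polynomial on $G_1 \cong \mathds{Z}^m$ and constant on $G_2$. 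Then one shows inductively that $u$ itself is constant in the $G_2$ directions: since $v(x+w) = v(x)$ for $w \in G_2$ and $v = L^{n-1,S}u$, the difference $u(x+w) - u(x)$ lies in $\ker L^{n-1,S}$ and still has polynomial growth of order $\le d$, so by the inductive hypothesis on $n$ it is constant on $G_2$; combined with a telescoping/averaging argument over the finite group $G_2$ this forces $u(x+w) = u(x)$. Hence $H^{n,d}(G,S) = H^{n,d}(G_1,\pi_{G_1}S)$ and every such $f$ descends to a function on $\mathds{Z}^m$.

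Next, having reduced to $G = \mathds{Z}^m$, I would show that any $f \in H^{n,d}(\mathds{Z}^m, \pi_{G_1}S)$ is a polynomial of degree $\le [d]$. Here $L^{n-1,S}f$ is a polynomial growth harmonic function, hence by Heilbronn-type reasoning (Theorem \ref{GHT}) a polynomial; then one integrates back. Concretely, $L^S$ maps the space $\mathcal{P}_k$ of polynomials on $\mathds{Z}^m$ of degree $\le k$ into $\mathcal{P}_{k-2}$ (the discrete Laplacian lowers polynomial degree by $2$, because $\sum_{s \in S} (g(x+s) + g(x-s) - 2g(x))$ kills the top two homogeneous parts of a polynomial by the symmetry $S = -S$), and one checks that for $f$ with $L^{n,S}f = 0$ and polynomial growth, $f$ must in fact lie in $\bigcup_k \mathcal{P}_k$ — this uses that $L^{n-1,S}f \in \mathcal{P}_{j}$ for some $j$, that $L^S$ restricted to polynomials is surjective onto lower-degree polynomials with kernel the discrete harmonic polynomials, and an induction on $n$ solving the "Poisson equation" $L^S u = (\text{polynomial})$ within the polynomial ring. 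So $H^{n,d}(\mathds{Z}^m, \pi_{G_1}S) = \ker\bigl(L^{n,S}\big|_{\mathcal{P}_{[d]}}\bigr)$, viewing $L^{n,S}$ as an operator on the finite-dimensional space $\mathcal{P}_{[d]}$.

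Finally I would compute $\dim \ker\bigl(L^{n,S}|_{\mathcal{P}_{[d]}}\bigr)$ by the dimension-comparison trick used for Theorem \ref{DCT}. Let $k = [d]$. One has a filtration $\mathcal{P}_{k} \supset \mathcal{P}_{k-2} \supset \cdots$ and $L^S : \mathcal{P}_j \to \mathcal{P}_{j-2}$ is surjective (its kernel on $\mathcal{P}_j$ has dimension $\dim \mathcal{P}_j - \dim \mathcal{P}_{j-2} = \binom{m+j-1}{j} + \binom{m+j-2}{j-1}$, matching $\dim HP^j(\mathds{R}^m)$, which is how Theorem \ref{DCT} is proved). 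Consequently, for the $n$-th power, $L^{n,S}$ maps $\mathcal{P}_k$ onto $\mathcal{P}_{k-2n}$, so
\begin{equation*}
\dim \ker\bigl(L^{n,S}|_{\mathcal{P}_k}\bigr) = \dim \mathcal{P}_k - \dim \mathcal{P}_{k-2n} = \sum_{i=0}^{k} \binom{m+i-1}{i} - \sum_{i=0}^{k-2n}\binom{m+i-1}{i} = \sum_{i=k-2n+1}^{k}\binom{m+i-1}{i},
\end{equation*}
(with the convention that binomials with negative lower index vanish), which is exactly the claimed formula with $k = [d]$. The same computation bounds $HM^{n,d}$ from above once one knows a one-sided growth bound still forces membership in the polynomial space, via the Harnack inequality applied to the harmonic function $L^{n-1,S}f$ as in Theorem \ref{GNT}.

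The main obstacle I expect is the step that genuinely requires control of $L^{n,S}$ rather than $L^S$: showing that a polynomial-growth solution of $L^{n,S}f = 0$ on all of $\mathds{Z}^m$ is \emph{globally} a polynomial. For $n = 1$ this is Heilbronn's theorem; for $n \ge 2$ one cannot directly apply the gradient estimate to $f$ (it is not harmonic), so the argument must proceed through $v = L^{n-1,S}f \in H^{d}$ and then solve $L^S u = v$ in polynomials and argue that $f - u$ is a polynomial-growth solution of $L^{n-1,S}(\cdot) = 0$, closing the induction. Verifying that the "Poisson solution" $u$ can always be taken polynomial of the right degree — i.e. the surjectivity of $L^S$ on the polynomial ring of $\mathds{Z}^m$ and the bookkeeping of degrees — is the technical heart, but it is a finite-dimensional linear-algebra statement that follows from the rank computation already needed for Theorem \ref{DCT}.
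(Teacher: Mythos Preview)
Your proposal is correct and follows the paper's strategy almost exactly: reduce to the harmonic case via $L^{n-1,S}f\in H^d$, invoke the surjectivity of $L^S$ on polynomials (Corollary~\ref{coro1}) to solve the discrete Poisson equation and close an induction on $n$, and then compute the dimension by rank--nullity for $L^{n,S}:P_m^{[d]}\to P_m^{[d]-2n}$.

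Two small differences are worth noting. First, to show that $L^Sf$ (and hence $L^{n-1,S}f$) has growth $\le d$ you appeal to the mean value inequality; this is unnecessary, since the pointwise bound $|L^Sf(x)|\le \sharp S\cdot\max_{y\sim x}\{|f(y)|+|f(x)|\}$ already gives it, as the paper observes. Second, for the reduction modulo the torsion part $G_2$ the paper uses a covering argument (lift to $\tilde G\cong\Z^{m+l}$, apply the torsion-free case to get a polynomial, and use that a periodic polynomial is constant in the periodic directions), whereas you argue by induction on $n$ that $h_w(x):=u(x+w)-u(x)$ lies in $H^{n-1,d}$ and is therefore constant on $G_2$, and then telescope over the cyclic subgroup generated by $w$ to force $h_w\equiv 0$. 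Your route is valid (the telescoping gives $u(x+kw)-u(x)=k\,h_w(x)$, and $qw=0$ for $q=\mathrm{ord}(w)$ yields $h_w=0$); it avoids introducing the covering space at the cost of a short extra induction. Either way the remaining dimension count is identical to the paper's.
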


Since we do not use the representation formula, this method can be
applied in more general settings. In fact, Bochner's formula is
proved for Ricci flat graphs by Chung-Yau \cite{CY}, but it is not
easy to get the volume control or the Poincar\'e inequality in that
case. Lin-Yau \cite{LY} proved Bochner's formula for general graphs,
but this does not lead to a version of  Yau's gradient estimate
analogous to the case of Riemannian manifolds with nonnegative Ricci
curvature. In the Cayley graph case, Kleiner \cite{Kl} obtained the
Poincar\'e inequality, but for non-abelian groups, Bochner's formula
is unavailable (consider the free group case). For some special
graphs which can be embedded into a surface with nonnegative
sectional curvature in the sense of Alexandrov, Hua-Jost-Liu
\cite{HJL} proved the volume doubling property and the Poincar\'e
inequality but Bochner's formula. It seems that Bochner's formula is
sensitive to the local structure, but the volume growth property and
the Poincar\'e inequality are not, c.f. \cite{CS}. Hence the abelian
groups are very suitable candidates for the application of Bochner's
formula and Yau's gradient estimate. In addition, Alexopoulos'
theorem \cite{Al1} is more general than ours, but it depends on the
embedding of the nilpotent subgroup to simply connected Lie group.
It seems hard to calculate the precise dimension of polynomial
growth harmonic functions on groups of polynomial volume growth. One
step back, we give a dimension estimate by the geometric method in
\cite{HJ}.

\section{Preliminaries and Notations}
Let $G$ be an abelian group. It is called finitely generated if it
has a finite generating set. In this paper, we assume that any
finite generating set $S=\{s_1,s_2,\cdots,s_{2l}\}$ of $G$ is
symmetric, i.e. $S=-S$, or more precisely $s_i=-s_{i+l}, 1\leq i\leq
l$, but we do allow that elements of $S$ are repeated, that is
possibly $s_i=s_j$ for some $i\neq j.$ We also allow $0\in S.$ For
any finitely generated abelian group $G$ with a generating set $S,$
we have the associated Cayley graph $(V,E)$ for which $V=G,$ and
$xy\in E$ (denoted by $ x\sim y$) if $y-x\in S$ for $x,y\in V.$ The
duplicity of elements in $S$ produces multiedges between vertices,
and $0\in S$ makes self-loops. The vertices $x$ and $y$ are called
neighbors if $x\sim y.$  The degree of a vertex $x$ is the number of
its neighbors. Note that all vertices in $(G,S)$ have the same
degree $\sharp S,$ the cardinality of $S$. For the lattice
$\mathds{Z}^n$ with the standard generating set
$S^0=\{e_i\}_{i=1}^{2n},$ where $e_i=(0,\cdots,1,\cdots,0)$ is the
$i$-th unit vector and $e_i=-e_{i+n},$ $1\leq i\leq n$, we obtain
the standard integer lattice in $\mathds{R}^n$. This is the object
Heilbronn \cite{H} and Nayar \cite{N} studied. In this paper, we
consider the discrete harmonic functions on Cayley graphs of $G$
with arbitrary finite generating set $S.$

The Cayley graph of $(G,S)$ is endowed with a natural metric, called
the word metric (c.f. \cite{BBI}). For any $x,y\in G,$ the distance
between them is defined as the length of the shortest path
connecting $x$ and $y$ (each edge is of length one),
$$d^S(x,y):=\inf\{k\in \mathds{N}\mid \exists\  x=x_0\sim x_1\sim\cdots \sim x_k=y\}.$$
Denote by $B^S_r(x):=\{y\in G\mid d^S(y,x)\leq r\}$ the closed
geodesic ball centered at $x$ of radius $r$ $(r>0).$ The volume of
$B^S_r(x)$ is $|B^S_r(x)|:=\sharp G\cap B^S_r(x),$ i.e. the number
of vertices contained in $B^S_r(x).$ For the subset $\Omega\subset
G,$ $d^S(x,\Omega):=\inf\{d^S(x,y)\mid y\in \Omega\}$ for any $x\in
G,$ $\partial\Omega:=\{z\in G\mid d^S(z,\Omega)=1\},$ and
$\bar{\Omega}:=\Omega\cup\partial\Omega.$ For any function
$f:\bar{\Omega}\rightarrow \mathds{R},$ the discrete Laplacian
operator is defined on $\Omega$ as $(x\in \Omega)$
$$L^Sf(x)=\sum_{y\sim x}(f(y)-f(x)).$$ The function $f$ is called discrete harmonic (subharmonic) on $\Omega$ if $L^Sf(x)=0\ (\geq0)$ for all $x\in\Omega.$
In the lattice case $(\mathds{Z}^n, S^0)$, the Laplacian operator is
$$L^{S^0}f(x)=\sum_{i=1}^{2n}(f(x+e_i)-f(x)).$$ Moreover, the $n$th order Laplace operator is defined as
$$L^{n,S}=L^S\circ
L^S\circ\cdots\circ L^S,$$ i.e. $n$-times composition of Laplace
operators. These operators are counterparts of higher order elliptic
operators in the continuous setting. A function $f$ is called
$n$-harmonic if $L^{n,S}f=0.$ The gradient of $f$ at $x\in \Omega$
is defined as $|\nabla^S f|(x)=\sqrt{\sum_{y\sim x}(f(y)-f(x))^2}.$
We also need the partial difference operator (for $s\in S$ and $x\in
\Omega$)
$$\delta_sf(x):=f(x+s)-f(x).$$

For Cayley graphs of $(G, S_1)$ and $(G, S_2)$, it is known (c.f. \cite{M,G}) that
$$C_1d^{S_1}(x,y)\leq d^{S_2}(x,y)\leq C_2d^{S_1}(x,y),$$ for any $x,y\in G,$ where $C_1$ and $C_2$ depend only on $S_1, S_2.$ They are bi-Lipschitz equivalent in the metric point of view. Hence for any $x\in G,$ $r>0,$ $$|B^{S_1}_{\frac{r}{C_2}}(x)|\leq|B^{S_2}_r(x)|\leq|B^{S_1}_{\frac{r}{C_1}}(x)|.$$

By the fundamental theorem of finitely generated abelian groups
\cite{Su,R}, any finitely generated abelian group $G$ is isomorphic
to the direct sum
$\mathds{Z}^m\bigoplus\oplus_{i=1}^l\mathds{Z}_{q_i},$ where $m,l\in
\mathds{N}$, $q_i=p_i^{a_i}$ for some prime number $p_i$ and some
$a_i\in\mathds{N}.$ Hence there exists a generating set
$S^0=\{e_1,\cdots,e_{2m}, w_1,\cdots,w_{2l}\}$ ($e_i=-e_{i+m},$
$w_j=-w_{j+l}$, for $1\leq i\leq m,$ $1\leq j\leq l$) such that $G$
is identified with
$\mathds{Z}^m\bigoplus\oplus_{i=1}^l\mathds{Z}_{q_i},$ where
$\{e_1\cdots e_{2m}\}$ generates the torsion-free part
$\mathds{Z}^m$ and $\{w_1\cdots w_{2l}\}$ generates the torsion part
$\oplus_{i=1}^l\mathds{Z}_{q_i}$. For $(G,S^0)$ it is easy to see
that $C_1(m,S^0)r^m\leq|B_r^{S^0}(x)|\leq C_2(m, S^0)r^m,$ and
$|B_{2r}^{S^0}(x)|\leq C_3(m,S^0)|B^{S^0}_r(x)|,$ for any $x\in G$
and $r\geq1.$ Hence by the bi-Lipschitz equivalence, for any Cayley
graph $(G,S)$ we have
$$C_1(m,S^0,S)r^m\leq|B_r^{S}(x)|\leq C_2(m,S^0,S)r^m,$$
\begin{equation}\label{VDP}|B^{S}_{2r}(x)|\leq C_3(m,S^0,S)|B^S_r(x)|.
\end{equation} The volume growth property (\ref{VDP}) is called the volume doubling property.

In the sequel, for simplicity we shall omit $S$ from our notation,
e.g.,  $B_r(x):=B^S_r(x)$, if it does not cause any confusion. Also
harmonic functions on $G$ mean discrete harmonic functions. And
constants $C$ may change from line to line.

\section{Yau's Gradient Estimate}
In this section, we prove Bochner's formula on finitely generated
abelian groups and derive Yau's gradient estimate analogous to the
one in Riemannian manifolds with nonnegative Ricci curvature.

Kleiner \cite{Kl} proved the Poincar\'e inequality on Cayley graphs of finitely generated (not necessarily abelian) groups. Let $(G,S)$ be the Cayley graph of the finitely generated abelian group \begin{equation}\label{ABEL}G\cong\mathds{Z}^m\bigoplus\oplus_{i=1}^l\mathds{Z}_{q_i}.\end{equation} By the volume doubling property (\ref{VDP}), we obtain the uniform Poincar\'e inequality.
\begin{lemma}[Poincar\'e inequality, \cite{Kl}]\label{PI} Let $(G,S)$ be the Cayley graph as (\ref{ABEL}). Then there exists a constant $C(m,S)$ such that for any $p\in G,$ $R>0$ and any function $f: B_{3R}(p)\rightarrow\mathds{R}$ we have
\begin{equation}\label{PI1}\sum_{x\in B_R(p)}(f(x)-f_{B_R})^2\leq CR^2\sum_{x,y\in B_{3R}(p);x\sim y}(f(x)-f(y))^2,\end{equation} where $f_{B_R}=\frac{1}{|B_R(p)|}\mathop{\sum}_{x\in B_R(p)}f(x).$
\end{lemma}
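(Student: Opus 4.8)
The plan is to give the classical telescoping proof of the Poincar\'e inequality, exploiting the group structure to choose geodesics in a translation-invariant way and invoking the volume doubling property (\ref{VDP}) only at the very end; this is essentially Kleiner's argument \cite{Kl} specialized to the abelian case. If $R<1$ then $B_R(p)=\{p\}$ and both sides of (\ref{PI1}) vanish, so I may assume $R\geq1$ throughout. First I would record the elementary averaging identity
\begin{equation*}
\sum_{x\in B_R(p)}(f(x)-f_{B_R})^2=\frac{1}{2|B_R(p)|}\sum_{x,z\in B_R(p)}(f(x)-f(z))^2,
\end{equation*}
which reduces the claim to an upper bound on the double sum of pairwise differences $\sum_{x,z\in B_R(p)}(f(x)-f(z))^2$.

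For the double sum I would connect each ordered pair $(x,z)$ by a geodesic, and here the group structure enters. For every $g\in G$ I fix once and for all a shortest word representing $g$, i.e. a geodesic path $\gamma_g$ from $0$ to $g$ of length $d^S(0,g)$, and then set $\gamma_{x,z}:=x+\gamma_{z-x}$, a geodesic from $x$ to $z$ that depends on the pair only through the translate $z-x$. Since $x,z\in B_R(p)$, every vertex $v$ on $\gamma_{x,z}$ satisfies $d^S(v,p)\leq d^S(v,x)+d^S(x,p)\leq d^S(x,z)+R\leq3R$, so all edges used lie inside $B_{3R}(p)$, matching the right-hand side of (\ref{PI1}). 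Telescoping $f$ along $\gamma_{x,z}$ and applying Cauchy--Schwarz to its $d^S(x,z)\leq2R$ edges gives, writing $\delta_e f=f(a)-f(b)$ for an edge $e=(a,b)$,
\begin{equation*}
(f(x)-f(z))^2\leq 2R\sum_{e\in\gamma_{x,z}}(\delta_e f)^2 .
\end{equation*}
Summing over $x,z\in B_R(p)$ and exchanging the order of summation bounds the double sum by $2R\sum_{e\subset B_{3R}(p)}(\delta_e f)^2\,N(e)$, where $N(e):=\sharp\{(x,z)\in B_R(p)\times B_R(p):e\in\gamma_{x,z}\}$ is the number of chosen geodesics crossing $e$.

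The main step, and the only place the geometry really enters, is the edge-crossing bound $N(e)\leq C\,R\,|B_R(p)|$. I would estimate it as follows. Writing $g=z-x$, as $(x,z)$ ranges over $B_R(p)\times B_R(p)$ the difference $g$ ranges over $B_{2R}(0)$; and $e\in\gamma_{x,z}=x+\gamma_{z-x}$ forces the initial endpoint $a$ of $e$ to satisfy that $a-x$ is one of the $d^S(0,g)+1\leq2R+1$ (necessarily distinct) vertices of $\gamma_g$. Hence for each fixed $g$ there are at most $2R+1$ admissible $x$, and then $z=x+g$ is determined, so $N(e)\leq(2R+1)\,|B_{2R}(0)|$. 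Now the volume doubling property (\ref{VDP}) gives $|B_{2R}(0)|\leq C_3\,|B_R(0)|=C_3\,|B_R(p)|$, using that volumes are translation invariant on a Cayley graph; this yields $N(e)\leq C\,R\,|B_R(p)|$ with $C$ depending only on $m$ and $S$. Substituting this bound and dividing by $2|B_R(p)|$ via the averaging identity produces exactly (\ref{PI1}).

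The remaining points to verify are bookkeeping: that distinct vertices of a shortest path are genuinely distinct, so the count $2R+1$ is legitimate; that passing between ordered and unordered edges in $B_{3R}(p)$ only changes the constant; and that the reduction to $R\geq1$ together with $B_R(p)=B_{\lfloor R\rfloor}(p)$ loses nothing. Uniformity of $C$ over $p$ is automatic from vertex-transitivity of the Cayley graph, and uniformity over $R$ is precisely what volume doubling supplies, so the final constant $C=C(m,S)$ carries no dependence on $p$ or $R$, as required.
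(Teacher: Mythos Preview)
Your proof is correct: the averaging identity, the translation-invariant choice of geodesics, the Cauchy--Schwarz telescoping bound, and the edge-crossing estimate $N(e)\leq C\,R\,|B_R(p)|$ via volume doubling are all sound, and the bookkeeping remarks at the end cover the residual issues. One tiny sharpening: for fixed $g$ and a fixed edge $e$, the condition $e\in x+\gamma_g$ really pins down $x$ as $a$ minus one of the at most $d^S(0,g)\leq 2R$ \emph{edges} of $\gamma_g$ (you need both endpoints to match consecutive vertices, not just one), but your looser bound $2R+1$ via vertices is of course still valid.

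As for the comparison: the paper does not actually prove Lemma~\ref{PI}. It simply records that Kleiner \cite{Kl} established a Poincar\'e inequality on Cayley graphs of finitely generated groups, and observes that combining this with the volume doubling property (\ref{VDP}) upgrades it to the uniform form (\ref{PI1}) with constant $C=C(m,S)$. Your write-up is precisely the specialization of Kleiner's telescoping argument to the abelian setting, with the doubling step made explicit at the end; so you are supplying the details the paper outsources to \cite{Kl}, not taking a different route.
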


Note that by the independent works of Delmotte \cite{D2} and
Holopainen-Soardi \cite{HS} the Moser iteration can be carried out
for harmonic functions on graphs satisfying the volume doubling
property and the Poincar\'e inequality. First, the Caccioppoli
inequality for harmonic functions was obtained for general graphs
with bounded degree (c.f. \cite{CoG,LX,HS}).

\begin{lemma}[Caccioppoli inequality] Let $(G,S)$ be the Cayley graph as (\ref{ABEL}). For any harmonic function $f$ on $B_{6R}(p), R\geq1,$ it holds that
\begin{equation}\label{CCPI}\sum_{x\in B_R(p)}|\nabla f|^2(x)\leq \frac{C}{R^2}\sum_{x\in B_{6R}(p)}f^2(x),\end{equation} where $C=C(S).$
\end{lemma}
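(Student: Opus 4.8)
The plan is to run the standard Caccioppoli (energy) estimate, carried out by discrete summation by parts on the Cayley graph, using harmonicity to annihilate the bulk term. First I would fix a Lipschitz cutoff $\phi:G\to[0,1]$ adapted to two concentric balls: put $\phi(x)=\min\{1,\max\{0,\,3-R^{-1}d^S(p,x)\}\}$, so that $\phi\equiv 1$ on $B_{2R}(p)$, $\phi\equiv 0$ off $B_{3R}(p)$, and, since $|d^S(p,x+s)-d^S(p,x)|\le 1$ for every $s\in S$, one has $|\delta_s\phi(x)|\le 1/R$ for all $x\in G$ and $s\in S$. As $f$ is only given on $B_{6R}(p)$ while $\phi$ is supported in $B_{3R}(p)$, the function $g:=\phi^2 f$ (extended by $0$) is well defined on all of $G$.

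Next I would record the discrete Green identity. Writing $L^Sh(x)=\sum_{s\in S}\delta_s h(x)$ and reindexing the double sum with the help of $S=-S$, one obtains, for any finitely supported $g$ and any $h$ defined on a large enough set,
\[\sum_{x\in G}L^Sh(x)\,g(x)=-\frac12\sum_{x\in G}\sum_{s\in S}\delta_s h(x)\,\delta_s g(x).\]
Applying this with $h=f$ and $g=\phi^2 f$, the left-hand side vanishes because $g$ is supported in $B_{3R}(p)\subset B_{6R}(p)$, where $L^Sf\equiv 0$. Expanding $\delta_s(\phi^2 f)$ by the symmetrized discrete Leibniz rule
\[\delta_s(\phi^2 f)(x)=\frac{\phi^2(x)+\phi^2(x+s)}{2}\,\delta_s f(x)+\frac{f(x)+f(x+s)}{2}\,\delta_s(\phi^2)(x),\]
together with $\delta_s(\phi^2)(x)=(\phi(x)+\phi(x+s))\,\delta_s\phi(x)$, the identity rearranges to
\[\sum_{x,s}\frac{\phi^2(x)+\phi^2(x+s)}{2}(\delta_s f)^2(x)=-\sum_{x,s}\frac{f(x)+f(x+s)}{2}\,(\phi(x)+\phi(x+s))\,\delta_s\phi(x)\,\delta_s f(x).\]

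Then I would absorb. Bounding the right-hand side by Young's inequality $|ab|\le\varepsilon a^2+\tfrac1{4\varepsilon}b^2$ with $a=(\phi(x)+\phi(x+s))\,\delta_s f(x)$ and $b=\tfrac{f(x)+f(x+s)}{2}\,\delta_s\phi(x)$, using $(\phi(x)+\phi(x+s))^2\le 2(\phi^2(x)+\phi^2(x+s))$ and $|\delta_s\phi|\le 1/R$, and choosing $\varepsilon$ small enough to move half of the left-hand side to the left, gives
\[\sum_{x,s}\frac{\phi^2(x)+\phi^2(x+s)}{2}(\delta_s f)^2(x)\le\frac{C}{R^2}\sum_{x,s:\,\delta_s\phi(x)\neq 0}\big(f^2(x)+f^2(x+s)\big).\]
On the left, $\phi\equiv 1$ on $B_{2R}(p)\supset B_{R+1}(p)$, so discarding all but $x\in B_R(p)$ bounds the left-hand side below by a fixed multiple of $\sum_{x\in B_R(p)}|\nabla f|^2(x)$ (every edge with an endpoint in $B_R(p)$ has both $\phi$-values equal to $1$). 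On the right, $\delta_s\phi(x)\neq 0$ forces $d^S(p,x)\le 3R$, hence $x,x+s\in B_{3R+1}(p)\subset B_{6R}(p)$ for $R\ge 1$; since every vertex has degree $\sharp S<\infty$, the right-hand side is $\le C(S)R^{-2}\sum_{x\in B_{6R}(p)}f^2(x)$. Combining the two bounds yields \eqref{CCPI}. The routine part is the Leibniz–Young bookkeeping; the only points needing a little care are that in the summation by parts one must sum over the generating set $S$ (so that multi-edges and self-loops are handled correctly, and the reindexing genuinely uses $S=-S$), and that the cutoff must be built with the two scales $2R$ and $3R$ so that simultaneously the left-hand side controls $\sum_{B_R}|\nabla f|^2$ and the region where $\delta_s\phi\neq 0$ stays inside $B_{6R}(p)$. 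Neither is a real obstacle.
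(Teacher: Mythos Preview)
Your argument is correct: the cutoff construction, the discrete Green identity (using $S=-S$), the symmetrized Leibniz expansion, the Young--absorption step, and the final ball containments all check out. The only cosmetic remark is that your right-hand side actually lands in $B_{3R+1}(p)$, so you could have stated the inequality with $B_{3R+1}$ (or even $B_{4R}$) in place of $B_{6R}$; the paper's $6R$ is generous.

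As for comparison with the paper: the paper does \emph{not} prove this lemma. It states it and cites Coulhon--Grigoryan, Lin--Xi, and Holopainen--Soardi for the Caccioppoli inequality on general bounded-degree graphs. What you have written is exactly the standard energy-cutoff argument that those references carry out (adapted here to the Cayley-graph setting, where summing over the generating multiset $S$ is the right bookkeeping). So your proof is not a different route; it is a self-contained version of the cited result, which is a welcome addition since the paper itself treats the lemma as a black box.
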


The Moser iteration implies the Harnack inequality for positive harmonic functions.
\begin{lemma}[Harnack inequality]\label{HNKI} Let $(G,S)$ be the Cayley graph as (\ref{ABEL}). Then there exist constants $C_1(m,S)$ and $C_2(m,S)$ such that for any $p\in G,$ $R\geq1$ and any positive harmonic function $f$ on $B_{C_1R}(p)$ we have
\begin{equation}\label{HNKI1}\max_{B_R(p)}f\leq C_2 \min_{B_R(p)}f.\end{equation}
\end{lemma}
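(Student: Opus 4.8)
The plan is to derive the Harnack inequality (\ref{HNKI1}) from the volume doubling property (\ref{VDP}) and the Poincar\'e inequality (Lemma \ref{PI}) via the Moser iteration technique, following the by now standard scheme of Delmotte \cite{D2} and Holopainen-Soardi \cite{HS}. The two geometric ingredients---volume doubling and the scale-invariant Poincar\'e inequality---are exactly the hypotheses under which this machinery is known to produce a scale-invariant (parabolic or elliptic) Harnack inequality on graphs of bounded degree; here the degree is uniformly $\sharp S$, so the bounded-degree hypothesis is automatic. The task is therefore to run the iteration carefully, keeping track of the fact that all constants depend only on $m$ and $S$ (through the doubling constant $C_3(m,S^0,S)$ and the Poincar\'e constant $C(m,S)$), and to fix the geometric constant $C_1$ governing the enlarged ball on which $f$ must be positive and harmonic.

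First I would establish the two one-sided Moser estimates for a positive harmonic function $f$ on a large ball $B_{C_1R}(p)$. For the upper bound, one applies the Caccioppoli inequality (\ref{CCPI}) to powers $f^q$ (which are subharmonic for $q\geq 1$ and, after a standard computation using $L^Sf=0$, satisfy a reverse-Poincar\'e-type estimate), combines it with the Sobolev inequality that follows from volume doubling plus (\ref{PI1}), and iterates over a sequence of shrinking radii $R_k\downarrow R$ to obtain a bound of the form $\max_{B_R(p)} f \leq C\,\bigl(\tfrac{1}{|B_{2R}(p)|}\sum_{x\in B_{2R}(p)} f\bigr)$, i.e. $L^\infty$ is controlled by the $L^1$ average. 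For the lower bound on the infimum one runs the same iteration with negative exponents $q<0$, using that $f^q$ is then subharmonic as well, to get $\min_{B_R(p)} f \geq c\,\bigl(\tfrac{1}{|B_{2R}(p)|}\sum_{x\in B_{2R}(p)} f^{-1}\bigr)^{-1}$, again with constants depending only on $m$ and $S$.

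To close the gap between the $L^1$ average of $f$ and the $L^{-1}$ average of $f$ (equivalently, between positive and negative small powers), I would invoke a John--Nirenberg type argument. Concretely, setting $g=\log f$, the Caccioppoli inequality together with the Poincar\'e inequality (\ref{PI1}) yields that $g$ has bounded mean oscillation on balls at all scales, with a BMO seminorm bounded by a constant $C(m,S)$; the John--Nirenberg lemma on the doubling metric measure space $(G,d^S,\sharp\cdot)$ then produces a small exponent $p_0>0$ and a constant such that $\bigl(\fint_{B_{2R}} f^{p_0}\bigr)\bigl(\fint_{B_{2R}} f^{-p_0}\bigr)\leq C$. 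Chaining this with the two Moser iterations---run down to the exponents $\pm p_0$ rather than $\pm 1$---gives $\max_{B_R(p)} f\leq C_2 \min_{B_R(p)} f$, which is precisely (\ref{HNKI1}); the geometric constant $C_1$ is determined by the largest enlargement factor used (the $B_{6R}$ of Caccioppoli and the $B_{3R}$ of Poincar\'e, composed over the iteration), and depends only on $S$.

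The main obstacle I anticipate is the John--Nirenberg / logarithm step: unlike the purely algebraic Moser iterations, it requires a genuine covering or chaining argument adapted to the word metric, and one must verify that the doubling property (\ref{VDP}) and the \emph{enlarged-ball} form of the Poincar\'e inequality (which loses a controlled factor when passing from $B_R$ to $B_{3R}$) are strong enough to run the standard Euclidean-style BMO estimate uniformly across scales. The other point demanding care is the bookkeeping of the radii enlargements through the iteration so that every step stays inside the single ball $B_{C_1R}(p)$ on which $f$ is assumed positive and harmonic, and so that the final constants $C_1(m,S)$ and $C_2(m,S)$ genuinely depend only on $m$ and $S$ and not on $R$, $p$, or $f$. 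Since all these ingredients are available in \cite{D2,HS} under exactly our hypotheses, I would cite those references for the technical iteration details and present only the verification that the abelian Cayley graph setting satisfies their standing assumptions.
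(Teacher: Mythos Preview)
Your proposal is correct and matches the paper's own treatment: the paper does not actually prove Lemma~\ref{HNKI} but simply invokes the Moser iteration of Delmotte~\cite{D2} and Holopainen--Soardi~\cite{HS}, noting that the volume doubling property~(\ref{VDP}) and the Poincar\'e inequality (Lemma~\ref{PI}) supply precisely the hypotheses those works require. Your outline of the iteration (sub- and supersolution estimates plus the John--Nirenberg step for $\log f$) is exactly what lies behind those citations, so you are giving a faithful expansion of what the paper leaves implicit.
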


The mean value inequality follows from one part of Moser iteration (c.f. \cite{CoG,D2,HS}).
\begin{lemma}[Mean value inequality] Let $(G,S)$ be the Cayley graph as (\ref{ABEL}). Then there exists a constant $C_1(m,S)$ such that for any $p\in G,$ $R>0$ and any nonnegative subharmonic function $f$ on $B_R(p)$ we have
\begin{equation}\label{MVI}f(p)\leq\frac{C_1}{|B_R(p)|}\sum_{x\in B_R(p)}f(x).\end{equation}
\end{lemma}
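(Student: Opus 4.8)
The plan is to derive (\ref{MVI}) by Moser iteration, following the scheme of \cite{CoG,D2,HS}, using the two analytic inputs already at hand: the volume doubling property (\ref{VDP}) and the Poincar\'e inequality (\ref{PI1}). First I would record a Caccioppoli-type energy estimate for nonnegative subharmonic functions. For a cutoff $\eta$ supported in a ball, testing the subharmonicity $L^Sf(x)=\sum_{y\sim x}(f(y)-f(x))\geq 0$ against $\eta^2 f$ and summing by parts yields $\sum \eta^2|\nabla f|^2\leq C\sum|\nabla \eta|^2 f^2$. Moreover, for every $p\geq 1$ the function $f^p$ is again nonnegative and subharmonic, since discrete convexity gives $L^S(f^p)(x)\geq p\,f^{p-1}(x)\,L^Sf(x)\geq 0$; hence the same energy estimate applies with $f$ replaced by $f^p$.

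The second step is to upgrade volume doubling together with the Poincar\'e inequality to a Sobolev inequality of Saloff-Coste type: there is an exponent $\kappa>1$ (taking $\kappa=\nu/(\nu-2)$ after inflating the effective dimension to some $\nu>2$ so as to avoid the low-dimensional exception) and a constant $C=C(m,S)$ so that, for functions $g$ supported in $B_R(p)$, one has $\big(\frac{1}{|B_R|}\sum|g|^{2\kappa}\big)^{1/\kappa}\leq C\frac{R^2}{|B_R|}\sum|\nabla g|^2$. Combining this with the Caccioppoli estimate applied to $\eta f^p$ gives, for concentric balls $B_{r'}\subset B_r\subset B_R$, a reverse-H\"older improvement of the form $\|f\|_{L^{2\kappa p}(B_{r'})}\leq \big(\frac{C}{r-r'}\big)^{1/p}\|f\|_{L^{2p}(B_r)}$.

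Third, I would iterate this inequality over a geometric sequence of radii $r_j\downarrow R/2$ and exponents $p_j=\kappa^{\,j}\to\infty$. Summing the resulting geometric series in the exponents produces the $L^2$ mean value inequality $\max_{B_{R/2}(p)}f\leq C\big(\frac{1}{|B_R|}\sum_{x\in B_R(p)}f^2(x)\big)^{1/2}$; in particular $f(p)$ is controlled by the $L^2$ average of $f$. Note that the natural base exponent here is $2$, since the energy estimate already requires $f^p$ to be subharmonic, i.e. $p\geq 1$.

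Finally, the genuine obstacle is to lower the integrability exponent from $2$ to $1$, as (\ref{MVI}) asserts control by the plain average. I expect this to be the hardest step: it requires the abstract exponent-reduction argument (going back to Li--Schoen, c.f. \cite{CoG}) that feeds the $L^q$ mean value bound back into itself through a weighted iteration over a shrinking family of balls and absorbs the resulting supremum term by Young's inequality, pushing the exponent down to any $q>0$ and in particular to $q=1$. Once the mean value inequality holds for exponent $1$ on $B_{R/2}(p)$, the volume doubling property (\ref{VDP}) allows one to replace the average over $B_{R/2}(p)$ by that over $B_R(p)$ at the cost of a multiplicative constant, which yields $f(p)\leq \frac{C}{|B_R(p)|}\sum_{x\in B_R(p)}f(x)$ and hence (\ref{MVI}).
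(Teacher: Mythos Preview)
Your proposal is correct and follows precisely the route the paper indicates: the paper does not supply an independent proof of this lemma but simply records that ``the mean value inequality follows from one part of Moser iteration (c.f.\ \cite{CoG,D2,HS})'', and your outline---Caccioppoli for powers of $f$, Sobolev from (\ref{VDP}) and (\ref{PI1}), iteration to an $L^2$ bound, then the Li--Schoen exponent-reduction to reach the $L^1$ average---is exactly the content of those references in this setting. There is nothing to add beyond noting that your caution about the $L^2\!\to\!L^1$ step is warranted, since the paper's application in the proof of Theorem~\ref{YGE} uses the inequality in its $L^1$ form.
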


The following Liouville theorem is a corollary of the Harnack inequality (\ref{HNKI1}).
\begin{lemma}[Liouville theorem]\label{LVT} Let $(G,S)$ be the Cayley graph as (\ref{ABEL}). Then any nonnegative harmonic function $f$ on $G$ is constant.
\end{lemma}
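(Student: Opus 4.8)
The plan is to obtain Lemma~\ref{LVT} as a direct consequence of the scale-invariant Harnack inequality (\ref{HNKI1}) by letting the radius tend to infinity, exactly as in the Riemannian setting. First I would normalize: set $a:=\inf_{x\in G}f(x)\ge 0$ (the infimum is finite since $f\ge 0$) and replace $f$ by $u:=f-a$, which is again harmonic and nonnegative and now satisfies $\inf_{x\in G}u(x)=0$. Because $S$ generates $G$ (and $S=-S$), every vertex lies at finite word distance from the fixed base point $p$, so the balls $B_R(p)$ exhaust $G$ as $R\to\infty$; since each $B_R(p)$ is finite (by the volume bound $|B_R^S(p)|\le C(m,S^0,S)R^m$), the minimum $m_R:=\min_{B_R(p)}u$ is attained, is nonincreasing in $R$, and $m_R\to\inf_{x\in G}u(x)=0$.

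The only point requiring a small detour is that Lemma~\ref{HNKI} applies to \emph{strictly positive} harmonic functions, whereas $u$ may vanish somewhere. I would handle this by fixing an arbitrary $\varepsilon>0$ and applying the Harnack inequality to $u+\varepsilon$, which is positive and harmonic on all of $G$, hence in particular on $B_{C_1R}(p)$ for every $R\ge 1$. This yields $\max_{B_R(p)}u+\varepsilon\le C_2\bigl(m_R+\varepsilon\bigr)$ with $C_1=C_1(m,S)$ and $C_2=C_2(m,S)$ independent of $R$ and $\varepsilon$. Now fix any $x\in G$; for $R\ge\max\{1,d^S(p,x)\}$ we have $x\in B_R(p)$, so $u(x)+\varepsilon\le C_2(m_R+\varepsilon)$. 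Letting $R\to\infty$ gives $u(x)+\varepsilon\le C_2\varepsilon$, and then letting $\varepsilon\to 0$ gives $u(x)\le 0$; since $u\ge 0$ this forces $u(x)=0$. As $x$ was arbitrary, $u\equiv 0$, i.e. $f\equiv a$ is constant.

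I do not anticipate a genuine obstacle here, since the statement is a soft corollary of Lemma~\ref{HNKI}; the two things to be careful about are (i) the positivity technicality just described, and (ii) the fact that the Harnack constant must not degenerate as $R\to\infty$ — but that is exactly why (\ref{HNKI1}) was recorded in scale-invariant form with constants depending only on $m$ and $S$, uniformly over all balls. As an alternative route one could instead invoke the mean value inequality (\ref{MVI}), applied to the nonnegative subharmonic function $u$ on $B_R(p)$, together with the volume doubling property (\ref{VDP}) to compare averages of $u$ over $B_R(p)$ and a concentric larger ball; this again pins down $u$ as constant. The Harnack-based argument above is, however, the most economical, so that is the one I would present.
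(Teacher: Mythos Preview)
Your argument is correct and is precisely the intended one: the paper records Lemma~\ref{LVT} as an immediate corollary of the scale-invariant Harnack inequality (\ref{HNKI1}), and your normalization $u=f-\inf f$ together with the $\varepsilon$-shift to ensure strict positivity is the standard way to make this rigorous. There is nothing to add.
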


Bochner's formula was obtained by Chung-Yau and Lin-Yau on Ricci
flat graphs (\cite{CY,LY}). For the case of Cayley graphs of
finitely generated abelian groups, we present the proof here for the
convenience of readers.
\begin{lemma}[Bochner's formula]\label{BCHNL} Let $(G,S)$ be the Cayley graph as (\ref{ABEL}) and $f$ be a harmonic function defined on $B_1(x),$ for $x\in G$. Then
\begin{equation}\label{BCHN}L^S|\nabla f|^2(x)\geq0.\end{equation}
\end{lemma}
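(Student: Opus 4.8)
The plan is to compute $L^S|\nabla f|^2(x)$ directly from the definition and show that the harmonicity of $f$ forces the result to be a sum of squares. Write $g(y) := |\nabla f|^2(y) = \sum_{s\in S}\bigl(f(y+s)-f(y)\bigr)^2 = \sum_{s\in S}(\delta_s f(y))^2$. Then
\[
L^S g(x) = \sum_{t\in S}\bigl(g(x+t)-g(x)\bigr) = \sum_{t\in S}\sum_{s\in S}\Bigl(\bigl(\delta_s f(x+t)\bigr)^2 - \bigl(\delta_s f(x)\bigr)^2\Bigr).
\]
The key algebraic point, which is where abelianness enters, is that the difference operators commute: $\delta_s f(x+t) - \delta_s f(x) = \delta_t f(x+s) - \delta_t f(x)$, since both equal $f(x+s+t)-f(x+s)-f(x+t)+f(x)$ (here I use $x+s+t = x+t+s$). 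I would use the identity $a^2-b^2 = (a-b)^2 + 2b(a-b)$ with $a = \delta_s f(x+t)$, $b=\delta_s f(x)$, so that
\[
L^S g(x) = \sum_{s,t\in S}\bigl(\delta_s f(x+t)-\delta_s f(x)\bigr)^2 \;+\; 2\sum_{s\in S}\delta_s f(x)\sum_{t\in S}\bigl(\delta_s f(x+t)-\delta_s f(x)\bigr).
\]
The first sum is manifestly $\geq 0$. So the whole proof reduces to showing the cross term vanishes.

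For the cross term, fix $s\in S$ and look at the inner sum $\sum_{t\in S}\bigl(\delta_s f(x+t)-\delta_s f(x)\bigr) = \sum_{t\in S}\delta_t(\delta_s f)(x)$. By commutativity of the difference operators this equals $\sum_{t\in S}\delta_s(\delta_t f)(x) = \delta_s\Bigl(\sum_{t\in S}\delta_t f\Bigr)(x) = \delta_s\bigl(L^S f\bigr)(x)$. Since $f$ is harmonic on $B_1(x)$ — and hence $L^S f \equiv 0$ at $x$ and at all neighbors $x+s$, so that $\delta_s(L^S f)(x) = L^S f(x+s) - L^S f(x) = 0$ — each inner sum is zero, and the cross term disappears. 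This yields
\[
L^S|\nabla f|^2(x) = \sum_{s,t\in S}\bigl(f(x+s+t)-f(x+s)-f(x+t)+f(x)\bigr)^2 \;\geq\; 0,
\]
which is the claim (and in fact exhibits the "Hessian-squared" term explicitly).

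The main obstacle is bookkeeping with the domain of definition: $g(x+t)$ involves $f$ at points $x+t+s$ for $s\in S$, i.e. at distance up to $2$ from $x$, so one should be slightly careful that the statement only assumes $f$ harmonic on $B_1(x)$ — but $|\nabla f|^2$ at $x$ and the computation of $L^S|\nabla f|^2(x)$ only require $f$ to be \emph{defined} on $B_2(x)$ and \emph{harmonic} at $x$ and its neighbors, which is exactly what "harmonic on $B_1(x)$" gives once we interpret $B_1(x)$ as the open ball on which the Laplacian equation holds (the values on $\partial B_1(x)=B_2(x)\setminus B_1(x)$ enter only through $f$ itself, not through $L^S f$). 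So the only genuine input beyond elementary algebra is the commutativity $\delta_s\delta_t = \delta_t\delta_s$, which holds precisely because $G$ is abelian; on a non-abelian Cayley graph this step fails and the cross term need not vanish.
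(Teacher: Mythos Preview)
Your argument is correct, but it proceeds differently from the paper's. The paper treats each summand $|\delta_{s_i}f|^2$ separately and applies the Cauchy--Schwarz (H\"older) inequality
\[
\sum_{t\in S}\bigl(\delta_{s_i}f(x+t)\bigr)^2 \;\ge\; \frac{1}{|S|}\Bigl(\sum_{t\in S}\delta_{s_i}f(x+t)\Bigr)^2,
\]
then uses abelianness to rewrite the right-hand side as $|S|\,|\delta_{s_i}f(x)|^2$ via harmonicity at $x$ and $x+s_i$, yielding $L^S|\delta_{s_i}f|^2(x)\ge 0$. You instead use the elementary identity $a^2-b^2=(a-b)^2+2b(a-b)$ and kill the cross term exactly by $\delta_s(L^Sf)=0$. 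Your route avoids any inequality and produces the sharper \emph{equality}
\[
L^S|\nabla f|^2(x)=\sum_{s,t\in S}\bigl(\delta_s\delta_t f(x)\bigr)^2,
\]
which is the precise discrete analogue of the Riemannian Bochner identity $\tfrac12\Delta|\nabla f|^2=|\nabla^2 f|^2$ in the Ricci-flat case; the paper's Cauchy--Schwarz step discards this Hessian information. Conversely, the paper's per-direction inequality $L^S|\delta_{s_i}f|^2\ge 0$ is a slightly finer statement than the aggregate one you wrote down --- but your method gives that too, with the same identity applied to a single $s$. Your remark on the domain (that ``harmonic on $B_1(x)$'' in the paper's convention means $f:\overline{B_1(x)}\to\mathbb{R}$ with $L^Sf=0$ on $B_1(x)$, so harmonicity at $x+s$ is available) matches exactly what the paper is tacitly using in its step~(3.8).
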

\begin{proof} Let us denote $S=\{s_1,s_2,\cdots,s_{2l}\},$ where $s_i=-s_{i+l}, 1\leq i\leq l.$ Then $$|\nabla f|^2(x)=\sum_{y\sim x}(f(y)-f(x))^2=\sum_{i=1}^{2l}|\delta_{s_i}f|^2(x).$$ Without loss of generality, it suffices to prove $$L^S|\delta_{s_1}f|^2(x)\geq 0.$$
\begin{eqnarray}\label{Bochner1}
L^S|\delta_{s_1}f|^2(x)&=&\sum_{y\sim x}(|\delta_{s_1}f|^2(y)-|\delta_{s_1}f|^2(x))\nonumber\\
&=&\sum_{y\sim x}(f(y+s_1)-f(y))^2-2l|\delta_{s_1}f|^2(x)\nonumber\\
&\geq&\frac{1}{2l}[\sum_{y\sim x}f(y+s_1)-f(y)]^2-2l|\delta_{s_1}f|^2(x)\\
&=&\frac{1}{2l}[\sum_{i=1}^{2l}f(x+s_i+s_1)-\sum_{y\sim x}f(y)]^2-2l|\delta_{s_1}f|^2(x)\nonumber\\
&=&\frac{1}{2l}[\sum_{z\sim (x+s_1)}f(z)-\sum_{y\sim x}f(y)]^2-2l|\delta_{s_1}f|^2(x)\nonumber\\
&=&\frac{1}{2l}[2lf(x+s_1)-2lf(x)]^2-2l|\delta_{s_1}f|^2(x)\\
&=&0,\nonumber
\end{eqnarray} where we use the H\"older inequality in (\ref{Bochner1}) and the harmonicity of $f$ in ($3.8$).
\end{proof}

Combining Bochner's formula with previous results, we obtain Yau's gradient estimate for positive harmonic functions.
\begin{proof}[Proof of Theorem \ref{YGE}] We choose $C_1=7C_1',$ where $C_1'>1$ is the constant $C_1(m,S)$ in Lemma \ref{HNKI}.
Bochner's formula (\ref{BCHN}) implies that $|\nabla f|^2$ is a
subharmonic function on $B_{C_1R}(x).$ Then the theorem follows from
the mean value inequality (\ref{MVI}), the Caccioppoli inequality
(\ref{CCPI}), the volume doubling property (\ref{VDP}) and the
Harnack inequality (\ref{HNKI1}),
\begin{eqnarray*}
|\nabla f|^2(x)&\leq& \frac{C}{|B_R(x)|}\sum_{y\in B_R(x)}|\nabla f|^2(y)\\
&\leq&\frac{C}{R^2|B_R(x)|}\sum_{y\in B_{6R}(x)}f^2(y)\\
&\leq& \frac{C}{R^2|B_{6R}(x)|}\sum_{y\in B_{6R}(x)}f^2(y)\\
&\leq&\frac{C}{R^2}f^2(x).
\end{eqnarray*}
\end{proof}

\begin{corollary}\label{OSCC} Let $(G,S)$ be the Cayley graph as (\ref{ABEL}). There exist constants $C_1(m,S)$ and $C_2(m,S)$ such that for any $x\in G,$ $R\geq1$ any harmonic function $ f$ on $B_{C_1R}(x)$ we have
\begin{equation}\label{OSCE}|\nabla f|(x)\leq \frac{C_2}{R}\mathop{\osc}_{B_{C_1R}(x)} f,\end{equation} where
$\mathop{\osc}_{B_{C_1R}(x)}f:=\mathop{\max}_{B_{C_1R}(x)}f-\min_{B_{C_1R}(x)}f.$
\end{corollary}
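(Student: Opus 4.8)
The plan is to deduce the oscillation estimate (\ref{OSCE}) from Yau's gradient estimate (Theorem \ref{YGE}) by a harmless affine modification of the harmonic function. Fix $x\in G$ and $R\ge 1$, and let $C_1,C_2$ be the constants furnished by Theorem \ref{YGE}. Given a harmonic function $f$ on $B_{C_1R}(x)$, I would set $M:=\max_{B_{C_1R}(x)}f$ and, for each $\varepsilon>0$, introduce $g_\varepsilon:=M-f+\varepsilon$. Since $L^S$ annihilates constants and $L^Sf=0$, the function $g_\varepsilon$ is harmonic on $B_{C_1R}(x)$; and by the choice of $M$ we have $g_\varepsilon\ge\varepsilon>0$ on all of $B_{C_1R}(x)$. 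Thus $g_\varepsilon$ is a \emph{positive} harmonic function there, exactly as required by Theorem \ref{YGE}.

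The next step is the trivial observation that the discrete gradient is unchanged under affine transformations: for every $y$,
\[
|\nabla g_\varepsilon|^2(y)=\sum_{z\sim y}\bigl(g_\varepsilon(z)-g_\varepsilon(y)\bigr)^2=\sum_{z\sim y}\bigl(f(y)-f(z)\bigr)^2=|\nabla f|^2(y),
\]
because the additive constant cancels in each difference and the sign is removed by squaring. Hence Theorem \ref{YGE} applied to $g_\varepsilon$ at the point $x$ gives
\[
|\nabla f|(x)=|\nabla g_\varepsilon|(x)\le\frac{C_2}{R}\,g_\varepsilon(x)=\frac{C_2}{R}\bigl(M-f(x)+\varepsilon\bigr).
\]

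Finally, letting $\varepsilon\downarrow 0$ and using $f(x)\ge\min_{B_{C_1R}(x)}f$, one obtains
\[
|\nabla f|(x)\le\frac{C_2}{R}\bigl(M-f(x)\bigr)\le\frac{C_2}{R}\Bigl(\max_{B_{C_1R}(x)}f-\min_{B_{C_1R}(x)}f\Bigr)=\frac{C_2}{R}\mathop{\osc}_{B_{C_1R}(x)}f,
\]
which is (\ref{OSCE}), with the very same constants $C_1,C_2$ as in Theorem \ref{YGE}. I do not expect any genuine difficulty here; the one subtlety is that Theorem \ref{YGE} requires a strictly positive (not merely nonnegative) harmonic function, which is why I pass to $g_\varepsilon$ and then let $\varepsilon\to 0$. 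One could instead observe that if $M-f$ vanishes somewhere on $B_{C_1R}(x)$ then the Harnack inequality (\ref{HNKI1}) forces $f$ to be constant on the ball, making both sides of (\ref{OSCE}) zero; but the $\varepsilon$-shift avoids any case distinction.
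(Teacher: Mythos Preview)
Your proof is correct and follows essentially the same approach as the paper's: apply Theorem \ref{YGE} to an $\epsilon$-shifted affine transform of $f$ and let $\epsilon\to 0$. The only cosmetic difference is that the paper uses $f-\min_{B_{C_1R}(x)}f+\epsilon$, whereas you use $\max_{B_{C_1R}(x)}f-f+\epsilon$; both yield the oscillation bound in the same way.
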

\begin{proof} It suffices to choose $f-\min_{B_{C_1R}(x)}f+\epsilon$ (for small $\epsilon$) as the positive harmonic function in Theorem \ref{YGE}.
\end{proof}

\section{Polynomial Growth Harmonic Functions are Polynomials}
The fundamental theorem of finitely generated abelian groups implies
that for any finitely generated abelian group $G$ there exists a
generating set $S^0=\{e_1,\cdots,e_{2m}, w_1,\cdots,w_{2l}\}$
($e_i=-e_{i+m},$ $w_j=-w_{j+l}$, for $1\leq i\leq m,$ $1\leq j\leq
l$) such that $G=G_1\oplus G_2$ is identified with
$\mathds{Z}^m\bigoplus\oplus_{i=1}^l\mathds{Z}_{q_i},$ where
$\{e_1\cdots e_{2m}\}$ generates the torsion-free part
$G_1\cong\mathds{Z}^m$ and $\{w_1\cdots w_{2l}\}$ generates the
torsion part $G_2\cong\oplus_{i=1}^l\mathds{Z}_{q_i}$. For some
fixed $p\in G,$ we denote by $H^d(G,S)=\{u:G\rightarrow
\mathds{R}\mid L^Su=0, \exists C\ s.t.\ |u(x)|\leq
C(d^S(p,x)+1)^d\}$ the space of polynomial growth harmonic functions
on $G$ of growth order less than or equal to $d$. For fixed $S^0,$
$G_1$ is identified with $\mathds{Z}^m,$ then we denote by
$P^d(\mathds{Z}^m):=P^d(\mathds{R}^m)$ the space of polynomials in
$\mathds{R}^m$ restricted to the lattice $\mathds{Z}^m$ with degree
less than or equal to $d.$

First, we consider the easy case that $G$ is torsion-free, i.e. $G\cong\mathds{Z}^m.$ The following theorem generalizes the classical theorem of Heilbronn \cite{H}.
\begin{theorem}\label{TFL} Let $(G,S)$ be the Cayley graph of a torsion-free finitely generated abelian group, $G\cong\mathds{Z}^m.$ Then polynomial growth harmonic functions on $G$ are polynomials, i.e. $$H^d(G,S)\subset P^d(\mathds{Z}^m).$$
\end{theorem}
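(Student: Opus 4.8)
The plan is to argue by induction on $d$, using Yau's gradient estimate (Theorem \ref{YGE}) to control derivatives, and the Liouville theorem (Lemma \ref{LVT}) as the base case. Fix the generating set $S^0=\{e_1,\dots,e_{2m}\}$ identifying $G$ with $\mathds{Z}^m$; since all Cayley metrics on $G$ are bi-Lipschitz equivalent, it suffices to prove the statement for one generating set, and $H^d(G,S)$ as a vector space does not change if we replace $S$ by $S^0$ in the growth condition (only the constant $C$ changes). For $d<1$ the growth is sublinear, so by Corollary \ref{OSCC}, applying $|\nabla f|(x)\le \frac{C_2}{R}\osc_{B_{C_1R}(x)}f \le \frac{C_2}{R}\cdot 2C(C_1R+1)^d$ and letting $R\to\infty$ gives $\nabla f\equiv 0$, hence $f$ is constant, which lies in $P^0(\mathds{Z}^m)$.

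For the inductive step, assume the claim for growth order $d-1$ (more precisely, for all $d'\le d-1$). Let $f\in H^d(G,S)$. The key observation, stated just before Theorem \ref{TFL} in the text: for each $i$, the partial difference $\delta_{e_i}f(x)=f(x+e_i)-f(x)$ is again harmonic (the Laplacian commutes with translations on an abelian group), and by Yau's gradient estimate $|\delta_{e_i}f|(x)\le |\nabla f|(x)\le \frac{C_2}{R}\sup_{B_{C_1R}(x)}|f|$. Choosing $R\sim d^{S^0}(p,x)$ and using $|f|\le C(d^{S^0}(p,\cdot)+1)^d$ on $B_{C_1R}(x)$, one gets $|\delta_{e_i}f|(x)\le C'(d^{S^0}(p,x)+1)^{d-1}$, so $\delta_{e_i}f\in H^{d-1}(G,S)$. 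By the induction hypothesis each $\delta_{e_i}f$ is a polynomial of degree $\le d-1$. It remains to integrate: a function on $\mathds{Z}^m$ all of whose first differences $\delta_{e_i}f$ are polynomials of degree $\le d-1$ is itself a polynomial of degree $\le d$. This is a standard discrete-calculus fact — one can prove it by induction on $m$, reconstructing $f$ along each coordinate axis by summing $\delta_{e_i}f$ (a discrete antiderivative of a degree-$(d-1)$ polynomial is a degree-$d$ polynomial, e.g. via the basis of binomial coefficients $\binom{x_i}{k}$), and checking consistency using the commutation relation $\delta_{e_i}\delta_{e_j}f=\delta_{e_j}\delta_{e_i}f$, which forces the integration constants to assemble into a genuine polynomial.

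The main obstacle is the "integration" step: one must be careful that reconstructing $f$ from its first differences produces a well-defined global polynomial rather than just a function that is polynomial along each line separately. The commutativity $\delta_{e_i}\delta_{e_j}f=\delta_{e_j}\delta_{e_i}f$ is exactly the compatibility condition needed, and it holds automatically since $f$ is an honest function; the cleanest route is induction on $m$, fixing $x_m$ and applying the $(m-1)$-dimensional result to each slice, then summing $\delta_{e_m}f$ in the last variable and verifying that the resulting expression, being polynomial in $x_1,\dots,x_{m-1}$ for each fixed $x_m$ and having polynomial $\delta_{e_m}$-difference, is jointly polynomial. An alternative that sidesteps the bookkeeping is to note that $H^d(G,S)$ is finite-dimensional (which follows from the gradient estimate plus volume doubling, as in the Colding--Minicozzi theory) and that $P^d(\mathds{Z}^m)\subset H^d$ would then be compared by dimension count — but since the dimension computation (Theorem \ref{DCT}) comes later and relies on this theorem, the direct difference-equation argument above is the honest one to present here.
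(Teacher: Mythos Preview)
Your overall strategy coincides with the paper's: show that each partial difference $\delta_{e_i}f$ lands in $H^{d-1}(G,S)$ via the gradient estimate, then conclude that $f$ is a polynomial. The paper phrases the final step as ``all iterated differences of total order $\ge [d]+1$ vanish, hence $f$ is a polynomial by elementary difference-equation theory,'' which is equivalent to your inductive integration; your discussion of the consistency condition $\delta_{e_i}\delta_{e_j}f=\delta_{e_j}\delta_{e_i}f$ is exactly the content of that cited fact.

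There is, however, a real gap in how you handle the two generating sets. The claim ``it suffices to prove the statement for one generating set'' is not right: bi-Lipschitz equivalence lets you swap $d^S$ for $d^{S^0}$ in the \emph{growth condition}, but the Laplacian $L^S$ --- and hence harmonicity --- genuinely depends on $S$. Your key inequality $|\delta_{e_i}f|(x)\le |\nabla f|(x)$ is therefore unjustified: the gradient controlled by Corollary~\ref{OSCC} is $|\nabla^S f|(x)=\big(\sum_{s\in S}(f(x+s)-f(x))^2\big)^{1/2}$, and when $e_i\notin S$ the term $f(x+e_i)-f(x)$ simply does not appear in that sum. The paper confronts exactly this point: writing $e_i=\sum_k a_i^k s_k$ as a word in $S$ of bounded length $t=\sum_k|a_i^k|\le C(S^0,S)$ gives a path $x=x_0\sim x_1\sim\cdots\sim x_t=x+e_i$ in the $S$-Cayley graph, and then
\[
|\delta_{e_i}f|(x)\;\le\;\sum_{j=0}^{t-1}|\nabla^S f|(x_j)\;\le\;\frac{C}{R}\,\osc_{B_{2R}(x)}f,
\]
applying Corollary~\ref{OSCC} at each $x_j\in B_C(x)$. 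Once you insert this path argument, your proof is correct and essentially identical to the paper's.
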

\begin{proof} Let $S^0=\{e_1,\cdots,e_{2m}\}$ be the standard basis for $\mathds{Z}^m,$ $S=\{s_1,\cdots,s_{2l}\}$ be the generating set for the Cayley graph $(G,S).$
Then $$e_i=\sum_{k=1}^{2l}a_i^ks_k,$$ where $a_i^k\in \mathds{Z}$ for $1\leq i\leq 2m, 1\leq k\leq 2l.$ Let $$\delta_if(x):=\delta_{e_i}f(x)=f(x+e_i)-f(x),$$ for $1\leq i\leq 2m.$
Since $G$ is abelian, it is easy to show that
$$L^S\delta_if=\delta_iL^Sf$$ which implies that $\delta_if$ is harmonic if $f$ is.

We claim that $\delta_if\in H^{d-1}(G,S)$ if $f\in H^d(G,S).$
Although for any $x\in G,$ $x$ and $x+e_i$ may not be neighbors in
the Cayley graph $(G,S),$ there exists a path from $x$ to
$x+e_i=x+\sum_{k=1}^{2l}a_i^ks_k,$ i.e. $x=x_0\sim x_1\sim\cdots\sim
x_t=x+e_i,$ whose length is $t:=\sum_{k=1}^{2l}|a_i^k|\leq
C(S^0,S).$ Note that $f\in H^d(G,S)\Longleftrightarrow
\mathop{\osc}_{B_R(p)}f\leq CR^d$ for some fixed $p\in G$ and
sufficiently large $R\geq R_0.$ For any $x\in G,$ $1\leq i\leq 2m,$
\begin{eqnarray*}
|\delta_if|(x)&=&|f(x+e_i)-f(x)|\\
&\leq&|f(x+e_i)-f(x_{t-1})|+|f(x_{t-1})-f(x_{t-2})|+\cdots+|f(x_1)-f(x)|\\
&\leq&\sum_{j=0}^{t-1}|\nabla f|(x_j)\\
&\leq& \frac{C}{R} \mathop{\osc}_{B_{2R}(x)}f,
\end{eqnarray*}
for $R\geq R_1(S^0,S),$ since $x_j\in B_C(x),$ for $0\leq j\leq t-1.$ The last inequality follows from Corollary \ref{OSCC}. For any $x\in B_R(p),$ we have $B_{2R}(x)\subset B_{3R}(p).$ Then
$$\mathop{\osc}_{B_R(p)}\delta_i f\leq 2 \max_{x\in B_R(p)}|\delta_if|(x)\leq \frac{C}{R}\mathop{\osc}_{B_{3R}(p)}f\leq CR^{d-1},$$ for $R\geq R_1.$ This proves the claim.

Hence by taking finitely many times partial differences and the
Liouville theorem (Lemma \ref{LVT}), we obtain
$$\delta_1^{k_1}\delta_2^{k_2}\cdots\delta_{2m}^{k_{2m}}f=0,$$ for any $k_1+k_2+\cdots+k_{2m}\geq[d]+1$ and $f\in H^d(G,S),$ where $[d]$ is the maximal integer not exceeding $d$. By the basic difference equation theory
or Lemma $2.13$ in Nayar \cite{N}, we conclude that $f$ is a polynomial.
\end{proof}

Then we can prove the generalized Heilbronn's theorem.
\begin{proof}[Proof of Theorem \ref{GHT}]
It suffices to show that $f$ is constant on $G_2,$ i.e.
$f(x+w)=f(x),$ for any $x\in G,$ $w\in G_2.$ An alternative method
by covering argument can be found in the last section.

Let $S^0=\{e_1,\cdots,e_{2m}, w_1,\cdots,w_{2l}\}$ ($e_i=-e_{i+m},$
$w_j=-w_{j+l}$, for $1\leq i\leq m,$ $1\leq j\leq l$) such that
$G=G_1\oplus G_2$ is identified with
$\mathds{Z}^m\bigoplus\oplus_{i=1}^l\mathds{Z}_{q_i},$ where
$\{e_1\cdots e_{2m}\}$ generates the torsion-free part
$G_1\cong\mathds{Z}^m$ and $\{w_j, w_{j+l}\}$ generates
$\mathds{Z}_{q_i}$. Let $\delta_{w_j}f(x):=f(x+w_j)-f(x).$ The same
argument as in the proof Theorem \ref{TFL} implies that
$\delta_{w_j}f\in H^{d-1}(G,S)$ if $f\in H^d(G,S).$ Then
$\delta^{[d]+1}_{w_j}f\equiv0,$ for $f\in H^d(G,S).$ For fixed $x\in
G,$ lifting $\mathds{Z}_{q_j}$ to $\mathds{Z},$ we obtain that $f$
is a polynomial on $\mathds{Z}.$ Hence as a periodic polynomial $f$,
i.e. $f(x+(r+kq_j)w_j)=f(x+rw_j)$ for any $k,r\in\mathds{Z},$ must
be constant, i.e. $f(x+rw_j)=f(x),$ for any $r\in \mathds{Z}.$ Since
this is true for any $1\leq j\leq l,$ we obtain $f(x+w)=f(x),$ for
any $x\in G,$ $w\in G_2.$ Then it is easy to see that
$L^{\pi_{G_1}S}f(x_1)=0$ for any $x_1\in G_1$ if $L^Sf(x)=0$ for any
$x\in G.$ Hence $H^d(G,S)=H^d(G_1, \pi_{G_1}S)\subset
P^{[d]}(\mathds{Z}^m).$
\end{proof}

By the Harnack inequality, we reprove Nayar's theorem.
\begin{proof}[Proof of Theorem \ref{GNT}] It suffices to show $HM^d(G,S)\subset H^d(G,S).$ Without loss of generality, for any harmonic function $f$ satisfying $f(x)\geq -C(d(p,x)+1)^d,$ we need to prove that $f(x)\leq C(d(p,x)+1)^d,$ for some $C.$ For simplicity, we assume $f(p)=0.$ Let $C_1$ be the constant for the Harnack inequality in the Lemma \ref{HNKI}. Then for any $x\in B_R(p),$ $R>0,$ it is easy to see that $B_{C_1R}(x)\subset B_{(C_1+1)R}(p).$ Moreover $$f(y)\geq -C(d(p,y)+1)^d\geq -C((C_1+1)R+1)^d\geq -CR^d,$$ for $y\in B_{(C_1+1)R}(p),$ $R\geq1.$ That is $f(y)+CR^d\geq0$ on $B_{C_1R}(x).$ The Harnack inequality (\ref{HNKI1}) implies that
$$f(x)+CR^d\leq C(f(p)+CR^d)=C_2R^d.$$ Then we have
$$f(x)\leq CR^d,$$ for $x\in B_R(p),$ $R\geq 1.$ Hence there exists a constant $C$ such that $f(x)\leq C(d(p,x)+1)^d.$
\end{proof}

\section{Calculating the Dimension}
For calculating the dimension, by Theorem \ref{GHT}, it suffices to consider harmonic polynomials on a torsion-free finitely generated abelian group. Let $(G,S)$ be the Cayley graph of $G\cong\mathds{Z}^m.$ There exists a generating set $S^0=\{e_1,\cdots,e_{2n}\}$ such that $G$ is identified with $\mathds{Z}^m$ in $\mathds{R}^m.$ For $k\in \mathds{N}\cup\{0\},$ we denote by $P^k(\mathds{R}^m)$ the space of polynomials on $\mathds{R}^m$ of degree less than or equal to $k,$  by $P^k_m:=P^k(\mathds{Z}^m):=\{u:\mathds{Z}^m \rightarrow\mathds{R}\mid u|_{\mathds{Z}^m}=f|_{\mathds{Z}^m}, f\in P^k(\mathds{R}^m)\}$ the space of the restriction of polynomials on $\mathds{R}^m$ to $\mathds{Z}^m$ of degree less than or equal to $k.$ We denote by $R^k_m:=HP^k(\mathds{R}^m)$ the space of harmonic polynomials on $\mathds{R}^m$ ($\Delta u=0$) of degree less than or equal to $k.$ For the Cayley graph $(G,S)$ which is identified with $(\mathds{Z}^m,S),$ we set $D^k_{S,m}:=HP^k(\mathds{Z}^m,S):=\{u\in P^k(\mathds{Z}^m)\mid L^Su=0\}.$ In order to calculate the dimension of discrete harmonic polynomials, we make the dimension comparison between $D^k_{S,m}$ and $R^k_m.$ It is well known for harmonic polynomials on $\mathds{R}^m$ (see \cite{L2}) that
\begin{eqnarray*}\dim R^k_m&=&\binom{m+k-1}{k}+\binom{m+k-2}{k-1},\ \ \ \ \ \ \ \ \ \ (k\geq 1,\ \dim R^0_m=1)\\
\dim R^k_m&=&\dim R^k_{m-1}+\dim R^{k-1}_m,\  \ \ \ \ \ \ \ \ \ \ \ \ \ \ \  \ \ \ \  (k\geq1)\\
\dim P^k_m&=&\dim P^k(\mathds{R}^m)=\sum_{i=0}^k\binom{m+i-1}{i},   \ \ \ \ \  \ \ (k\geq 0)\\
\dim P^k_m&=&\dim R^k_m+\dim P^{k-2}_m.\  \ \ \ \ \ \ \ \ \ \ \ \ \
\ \  \ \  \ \ \ \ \ (k\geq2)\end{eqnarray*} 
\begin{lemma}\label{1dim}\begin{equation}\label{DM1}\dim D^k_{S,1}=\dim R^k_1=2,\end{equation} for any $S,$ $k\geq 1.$
\end{lemma}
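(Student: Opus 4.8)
The plan is to describe exactly how the discrete Laplacian $L^S$ acts on polynomials in one variable; this pins down its polynomial kernel. Write $S=\{s_1,\dots,s_{2l}\}$ with $s_i=-s_{i+l}$ for $1\le i\le l$, so that for $f$ defined on a ball in $\mathds{Z}$,
$$L^Sf(x)=\sum_{i=1}^{2l}\big(f(x+s_i)-f(x)\big)=\sum_{i=1}^{l}\big(f(x+s_i)+f(x-s_i)-2f(x)\big).$$
The lower bound $\dim D^k_{S,1}\ge 2$ is immediate: since $S=-S$ we have $\sum_{i=1}^{2l}s_i=0$, hence $L^S(ax+b)=a\sum_{i=1}^{2l}s_i=0$ for all $a,b\in\mathds{R}$, so the linearly independent polynomials $1$ and $x$ both lie in $D^k_{S,1}$ whenever $k\ge 1$.

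For the upper bound I would show that every $f\in D^k_{S,1}$ has degree at most $1$. Suppose instead $f$ has degree $d$ with $2\le d\le k$ and leading coefficient $c\neq 0$. Expanding binomially, $(x+s)^d+(x-s)^d-2x^d=d(d-1)s^2x^{d-2}+(\text{lower-order terms})$, while the monomials of $f$ of degree below $d$ contribute only to degrees strictly below $d-2$; summing over $i$ gives
$$L^Sf(x)=c\,d(d-1)\Big(\sum_{i=1}^{l}s_i^2\Big)x^{d-2}+(\text{lower-order terms}).$$
Since $S$ generates $\mathds{Z}$ it contains some nonzero $s_j$, so $\sum_{i=1}^{l}s_i^2\ge s_j^2>0$; together with $c\neq 0$ and $d(d-1)\neq 0$ this makes the coefficient of $x^{d-2}$ nonzero, so $L^Sf$ is a nonzero polynomial and hence $L^Sf\not\equiv 0$ on $\mathds{Z}$, contradicting harmonicity. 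Thus $D^k_{S,1}\subseteq\Span\{1,x\}$ and, with the lower bound, $\dim D^k_{S,1}=2$. Finally $\dim R^k_1=\binom{k}{k}+\binom{k-1}{k-1}=2$ for $k\ge 1$ (equivalently, on $\mathds{R}$ the Laplacian is $\partial_x^2$, whose polynomial kernel is $\Span\{1,x\}$), which matches the claimed equality.

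There is essentially no analytic content here: the only point requiring care is the bookkeeping that the coefficient of $x^{d-2}$ in $L^Sf$ is genuinely nonzero, i.e. that $\sum_{i=1}^{l}s_i^2>0$ even though $S$ may contain $0$ and repeated generators — and this is precisely where the hypothesis that $S$ generates $\mathds{Z}$ is used. Everything else is a one-line binomial computation; in particular $\dim D^k_{S,1}=2$ is independent of the choice of $S$, which is the one-variable instance of the general phenomenon established later in Theorem \ref{DCT}.
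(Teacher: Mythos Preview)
Your proof is correct and follows essentially the same approach as the paper's: both identify the leading term of $L^Sf$ for a polynomial $f$ via the expansion $f(x+s)+f(x-s)-2f(x)=s^2f''(x)+\cdots$ (you phrase it as a binomial expansion, the paper as a Taylor expansion --- the same thing for polynomials) and note that this leading coefficient involves $\sum_i s_i^2>0$, forcing $\deg f\le 1$. Your version is slightly more explicit in recording the lower bound $\{1,x\}\subset D^k_{S,1}$ and the value $\dim R^k_1=2$, which the paper leaves implicit.
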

\begin{proof}Let $S=\{a_i\}_{i=1}^{2l},$ $a_i\in\mathds{Z},$ and $a_i=-a_{i+l}$ for $1\leq i\leq l.$ Since $S$ is a generating set, at least one $a_i\neq 0.$ For any $f\in D^k_{S,1},$
$f=b_kx^k+b_{k-1}x^{k-1}+\cdots+b_0,$ where $b_0,b_1,\cdots,b_k\in \mathds{R}.$ By  Taylor expansion, the difference equation $L^Sf=0$ reads
$$\sum_{n=1}^{\infty}\sum_{i=1}^l2\frac{a_i^{2n}}{(2n)!}f^{(2n)}(x)=0.$$ Comparing the degree of polynomials, we have
$$(b_kx^k)''=0,$$ for $x\in\mathds{Z}.$ Hence, $k\leq 1,$ i.e. $f$ is linear.
\end{proof}

\begin{lemma}\begin{equation}\label{DMEL}\dim D_{S,m}^k\leq \dim R^k_m,\end{equation} for any $S,$ $k\geq0, m\geq 1.$
\end{lemma}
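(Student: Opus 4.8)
The plan is a double induction on $m$ and $k$ built on the recursion $\dim R^k_m=\dim R^k_{m-1}+\dim R^{k-1}_m$ ($k\geq1$) recalled above. The base cases are immediate: for $k=0$ the space $D^0_{S,m}$ consists only of the constants, so $\dim D^0_{S,m}=1=\dim R^0_m$; and for $m=1$ the inequality (in fact equality) is Lemma \ref{1dim}. So fix $m\geq2$ and $k\geq1$, and assume the bound for $D^k_{S',m-1}$ for every generating set $S'$ of $\mathds{Z}^{m-1}$ (and every degree), as well as for $D^{k-1}_{S,m}$.

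The device is to take a partial difference in a coordinate direction even though that direction need not lie in $S$. Let $\pi:\mathds{Z}^m\to\mathds{Z}^{m-1}$ be the projection deleting the last coordinate, and consider the linear map $u\mapsto\delta_{e_m}u$ on $D^k_{S,m}$. I would first check it maps $D^k_{S,m}$ into $D^{k-1}_{S,m}$: a first difference of a polynomial of degree $\leq k$ has degree $\leq k-1$, and since $G$ is abelian the shift operators commute, so $L^S\delta_{e_m}u=\delta_{e_m}L^Su=0$. Its kernel consists of the $u\in D^k_{S,m}$ with $u(y+e_m)=u(y)$ for all $y$, i.e.\ those depending only on the first $m-1$ coordinates; writing such a $u$ as $\bar u\circ\pi$ with $\bar u\in P^k_{m-1}$, each generator $s_i\in S$ enters $L^S$ only through $\pi(s_i)$, whence $L^Su=(L^{\pi S}\bar u)\circ\pi$, and therefore $u\in\ker\delta_{e_m}$ if and only if $\bar u\in D^k_{\pi S,m-1}$. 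Since $\pi$ is surjective, $\pi S$ is again a symmetric generating set of $\mathds{Z}^{m-1}$ (repetitions and $0$ being allowed throughout), so the induction hypothesis applies to $D^k_{\pi S,m-1}$.

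Putting this together, $\ker\delta_{e_m}\cong D^k_{\pi S,m-1}$ while $\mathrm{im}\,\delta_{e_m}\subseteq D^{k-1}_{S,m}$, so the rank-nullity theorem yields
$$\dim D^k_{S,m}\leq\dim D^k_{\pi S,m-1}+\dim D^{k-1}_{S,m}\leq\dim R^k_{m-1}+\dim R^{k-1}_m=\dim R^k_m,$$
where the middle step is the two induction hypotheses and the last equality the recursion for $R^k_m$. This is a discrete incarnation of the classical fact that a harmonic polynomial is determined by its restriction and normal derivative on a hyperplane, which produces the very same recursion for $R^k_m$. The only points that need care — and the closest thing to a real obstacle — are verifying the compatibility $L^S(\bar u\circ\pi)=(L^{\pi S}\bar u)\circ\pi$ of the Laplacians under the projection, and setting up the two-parameter induction so that both $(k,m-1)$ and $(k-1,m)$ are genuinely smaller than $(k,m)$; no delicate estimate enters anywhere.
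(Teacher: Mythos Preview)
Your proof is correct and follows essentially the same approach as the paper: both use the partial difference $\delta_{e_i}$ in a standard-basis direction, identify its kernel with $D^k_{\pi S,m-1}$ via the projection of $S$, and apply rank--nullity to obtain the recursion $\dim D^k_{S,m}\leq\dim D^k_{\pi S,m-1}+\dim D^{k-1}_{S,m}$. The only difference is organizational: the paper inducts on $k$ alone and unwinds the $m$-reduction as an explicit telescoping sum down to $m=1$ (invoking Lemma~\ref{1dim} there), whereas you package this as a clean double induction on $(m,k)$.
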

\begin{proof} It is easy to see that $D^0_{S,m}=R^0_m=\mathrm{const}.$ We apply an induction argument on $k.$ It suffices to prove (\ref{DMEL}) for $k=l$ if it is true for all $k\leq l-1.$
We pick $e_1\in S^0$ and define $\delta_1f(x)=f(x+e_1)-f(x),$ for any function $f$ on $G.$
Since $G$ is abelian, $\delta_1 L^S=L^S \delta_1,$ then $\delta_1$ is a well defined linear operator,
$$\delta_1:D_{S,m}^k\rightarrow D_{S,m}^{k-1}.$$
By linear algebra, \begin{equation}\label{LAD}\dim D^k_{S,m}=\dim \ker\delta_1+\dim \mathrm{im}\ \delta_1,\end{equation} where $\ker\delta_1$ and $\mathrm{im}\ \delta_1$ are Kernel and Image of $\delta_1.$
There is a natural projection $P:\mathds{Z}^m\rightarrow\mathds{Z}^{m-1},$ for any $x=(x_1,x_2,\cdots,x_m)\in\mathds{Z}^m,$ $Px=x',$ where $x'=(x_2,\cdots,x_m).$ For any generating set $S$ for $\mathds{Z}^m,$ $S'=\{s'\mid s\in S\}$ is the generating set of $\mathds{Z}^{m-1}.$ Moreover, $S^{(t)}:=P^tS$ is the generating set for $\mathds{Z}^{m-t},$ $1\leq t\leq m-1.$

For any $f\in \ker\delta_1,$ i.e. $\delta_1f=0,$ then $f(x_1,x_2,\cdots,x_m)=g(x_2,\cdots,x_m).$ Hence
\begin{eqnarray*}0&=&L^Sf(x)=\sum_{s\in S}(f(x+s)-f(x))\\
&=&\sum_{s\in S}(g(x'+s')-g(x'))=L^{S'}g(x').
\end{eqnarray*}
That is $\ker\delta_1=D^k_{S',m-1}.$
Hence (\ref{LAD}) implies that $$\dim D^k_{S,m}\leq\dim D^k_{S',m-1}+\dim D^{k-1}_{S,m},$$ for any $S,$ $k\geq1,$ $m\geq1.$
Then it follows that
\begin{eqnarray*}
\dim D^l_{S,m}&\leq&\dim D^l_{S',m-1}+\dim D^{l-1}_{S,m}\\
&\leq&\dim D^l_{S'',m-2}+\dim D^{l-1}_{S',m-1}+\dim D^{l-1}_{S,m}\\
&\leq&\cdots\cdots\\
&\leq&\dim D^l_{S^{(m-1)},1}+\sum_{i=2}^m\dim D_{S^{(m-i)},i}^{l-1}\\
&\leq&2+\sum_{i=2}^m \dim R_i^{l-1}\\
&=&\dim R_1^{l}+\sum_{i=2}^m \dim R_i^{l-1}\\
&=&\dim R^l_m,
\end{eqnarray*}
where we use Lemma $\ref{1dim},$ the inductive assumption (\ref{DMEL}) for $k\leq l-1$ and some facts in $\mathds{R}^n$.

\end{proof}

Now we can prove the main Theorem \ref{DCT}.
\begin{proof}[Proof of Theorem \ref{DCT}] It suffices to show that $$\dim D^k_{S,m}=\dim R^k_m,$$ for any $S,$ $k\geq0$ and $m\geq1.$ We may assume $k\geq2,$ otherwise it is trivial.
By $S=-S,$ the Laplacian operator $L^S$ is a linear operator,
\begin{equation}\label{eq12}L^S:P^k_m\rightarrow P^{k-2}_m.\end{equation} Since $\ker L^S=D^k_{S,m},$
we have
\begin{eqnarray*}
\dim P^k_m&=&\dim \ker L^S+\dim \mathrm{im}\ L^S\\
&\leq&\dim D^k_{S,m}+\dim P^{k-2}_m\\
&\leq&\dim R^k_m+\dim P^{k-2}_m\\
&=&\dim P^{k}_m,
\end{eqnarray*} which follows from (\ref{DMEL}).
Hence $\dim D^k_{S,m}=\dim R^k_m.$
\end{proof}

\begin{remark} \emph{In the above theorems, all the inequalities for
the dimension comparison are actually equalities. Hence, we obtain
that}
  $\delta_1$ \emph{and} $L^S$ \emph{are surjective linear
    operators. In fact, Heilbronn \cite{H} used the technical lemma in
    difference equation theory (see \cite{Hob,DS}) that}
  $L^{S^0}:P^k_m\rightarrow P^{k-2}_m$ \emph{is surjective to
    calculate the dimension. Conversely, by the dimension comparison, we
    obtain a more general difference equation lemma.}
\end{remark}

\begin{corollary}\label{coro1} Let $(\mathds{Z}^m,S)$ be the Cayley graph of $\mathds{Z}^m.$ Then
$$\delta_1:D^k_{S,m}\rightarrow D^{k-1}_{S,m},$$ and
\begin{equation}\label{eq11}L^S:P^k_m\rightarrow P^{k-2}_m\end{equation} are surjective linear
operators.
\end{corollary}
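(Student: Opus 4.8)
The plan is to read off both surjectivity statements from the observation that, once Theorem \ref{DCT} is established, every dimension-comparison inequality used in this section is forced to be an equality. Throughout we use the conclusion $\dim D^k_{S,m}=\dim R^k_m$ of Theorem \ref{DCT}, valid for every generating set $S$, every $k\geq 0$ and every $m\geq 1$.

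For $L^S\colon P^k_m\to P^{k-2}_m$ (we may take $k\geq 2$), I would revisit the chain displayed in the proof of Theorem \ref{DCT}:
\[
\dim P^k_m=\dim\ker L^S+\dim\mathrm{im}\,L^S\leq\dim D^k_{S,m}+\dim P^{k-2}_m\leq\dim R^k_m+\dim P^{k-2}_m=\dim P^k_m .
\]
Here the first equality is rank--nullity applied to $L^S$ on $P^k_m$, using $\ker L^S=D^k_{S,m}$; the first inequality uses $\mathrm{im}\,L^S\subseteq P^{k-2}_m$; the second is the estimate (\ref{DMEL}); and the last equality is the standard identity $\dim P^k_m=\dim R^k_m+\dim P^{k-2}_m$ recorded earlier. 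Since the two ends coincide, every intermediate quantity is equal; in particular $\dim\mathrm{im}\,L^S=\dim P^{k-2}_m$, and as $\mathrm{im}\,L^S$ is a subspace of the finite-dimensional space $P^{k-2}_m$ of the same dimension it must equal $P^{k-2}_m$. Hence $L^S$ is onto.

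For $\delta_1\colon D^k_{S,m}\to D^{k-1}_{S,m}$ (with $k\geq 1$ and $m\geq 2$; the case $m=1$ being treated directly via Lemma \ref{1dim}), I would use the first step of the chain that established (\ref{DMEL}). There $\delta_1$ was shown to be a well-defined linear map with $\ker\delta_1=D^k_{S',m-1}$ and $\mathrm{im}\,\delta_1\subseteq D^{k-1}_{S,m}$, so by (\ref{LAD})
\[
\dim D^k_{S,m}=\dim D^k_{S',m-1}+\dim\mathrm{im}\,\delta_1\leq\dim D^k_{S',m-1}+\dim D^{k-1}_{S,m}.
\]
Applying Theorem \ref{DCT} to each of the three $D$-spaces rewrites the left-hand side as $\dim R^k_m$ and the right-hand side as $\dim R^k_{m-1}+\dim R^{k-1}_m$; but these are equal by the recursion $\dim R^k_m=\dim R^k_{m-1}+\dim R^{k-1}_m$ for harmonic polynomials on $\mathds{R}^m$ recorded earlier. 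Hence the displayed inequality is an equality, which forces $\dim\mathrm{im}\,\delta_1=\dim D^{k-1}_{S,m}$, i.e. $\delta_1$ maps onto $D^{k-1}_{S,m}$.

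The argument is essentially bookkeeping, so I do not expect a serious obstacle; the only point needing care is to pin down, in each of the two cases, the precise chain of inequalities whose two endpoints coincide. This is exactly why one must invoke Theorem \ref{DCT} (equality of $\dim D^k_{S,m}$ and $\dim R^k_m$) rather than merely the one-sided bound (\ref{DMEL}), and why the base case Lemma \ref{1dim} together with the combinatorial identities for $\dim R^k_m$ and $\dim P^k_m$ is precisely what guarantees that these chains close up.
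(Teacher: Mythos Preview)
Your proposal is correct and follows exactly the approach the paper indicates: the paper proves this corollary only by the Remark preceding it, observing that once Theorem \ref{DCT} gives $\dim D^k_{S,m}=\dim R^k_m$, every inequality in the dimension-comparison chains must be an equality, whence surjectivity of both $L^S$ and $\delta_1$. Your write-up simply makes this bookkeeping explicit---tracing the chain from the proof of Theorem \ref{DCT} for $L^S$, and the first step of the chain in the proof of (\ref{DMEL}) together with the recursion $\dim R^k_m=\dim R^k_{m-1}+\dim R^{k-1}_m$ for $\delta_1$---so there is nothing to add.
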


\section{Dimension of Higher Order Harmonic Functions}
Let $(G,S)$ be the Cayley graph of the finitely generated abelian
group with a finite generating set $S,$
$G\cong\mathds{Z}^m\bigoplus\oplus_{i=1}^l\mathds{Z}_{q_i}.$ We
define the $n$th Laplace operator as
$$L^{n,S}=L^S\circ L^S\circ\cdots\circ L^S,$$ i.e. $n$-times
composition of Laplace operators. A function $u$ on $G$ is called
$n$-harmonic if $L^{n,S}u=0.$ We use the results for Laplace
operators in Theorem \ref{GHT} and Corollary \ref{coro1} to obtain
the dimension estimate for higher order harmonic functions.

\begin{proof}[Proof of Theorem \ref{hoo}] To simplify the argument,
we only prove the theorem for $n=2$ which directly applies to
general $n$. For the first assertion, it suffices to show that
$H^{2,d}(G,S)\subset P^d(\Z^m)$ for any torsion-free
$G\cong\mathds{Z}^m$ (as in Theorem \ref{TFL}) since we may reduce
the general case to this one. For a general $G=G_1\oplus
G_2\cong\mathds{Z}^m\bigoplus\oplus_{i=1}^l\mathds{Z}_{q_i}$ and a
generating set $S,$ we may find a torsion-free
$\tilde{G}\cong\Z^{m+l}\cong\Z^m\oplus\Z^l$ and a generating set
$\tilde{S}$ such that the Cayley graph $(\tilde{G},\tilde{S})$ is a
natural covering of $(G,S)$. For any $f\in H^{2,d}(G,S),$ the
lifting function $\tilde{f}$ of $f$ on $\tilde{G}$ is $2$-harmonic
and of polynomial growth in the sense of $(\tilde{G},\tilde{S}).$
This implies that $\tilde{f}$ is a polynomial on $\tilde{G}$ by the
torsion-free case. Since $\tilde{f}$ is periodic in the factor
$\Z^l$ by the lifting, this yields that as a polynomial $\tilde{f}$
is constant on $\Z^l.$ Hence $f$ is a polynomial on $\Z^m$ and
constant on the $G_2$ factor.

From now on, we assume $G\cong \Z^m.$ For any $f\in
H^{2,d}(\Z^m,S),$
$$L^{2,S}f=L^S(L^Sf)=0.$$ Since $|L^Sf(x)|\leq m\max_{y\sim
x}\{|f(y)|+|f(x)|\},$ we have $L^Sf\in H^{d}(\Z^m,S).$ By Theorem
\ref{GHT} or \ref{TFL}, we know that $L^Sf$ is a polynomial on
$\Z^m$, denoted by $g:=L^Sf.$ By \eqref{eq11} in Corollary
\ref{coro1}, we may find a polynomial, say $f_1,$ satisfying
$L^Sf_1=g.$ Hence $L^S(f-f_1)=0.$ The polynomial growth of $f-f_1$
implies that $f-f_1$ is actually a polynomial. This proves that $f$
is a polynomial (In fact, $f$ is a polynomial of degree less than or
equal to $d$ by $f\in H^{2,d}(\Z^m,S)$).

Now we calculate the dimension of $H^{2,d}(\Z^m,S).$ We set $k=[d].$
By \eqref{eq12}, we have a linear operator,
$$L^{2,S}:P_m^k\rightarrow P_m^{k-4}.\ \ \ \ \ (k\geq 4)$$
It is easy to see that $\ker L^{2,S}=H^{2,k}(\Z^m,S).$ Moreover,
\eqref{eq11} implies that $L^{2,S}$ is surjective. Hence $$\dim
H^{2,k}(\Z^m,S)=\dim \ker L^{2,S}=\dim P_m^k-\dim P_m^{k-4}.$$ This
proves the theorem.
\end{proof}

\end{document}